\documentclass[letterpaper, 10 pt, conference]{ieeeconf}
\IEEEoverridecommandlockouts
\usepackage{cite}
\usepackage{amsmath,amssymb,amsfonts}

\usepackage{amsthm}
\usepackage{algorithmic}
\usepackage{import}
\usepackage{graphicx}
\usepackage{textcomp}

\usepackage{xcolor}
\usepackage{mathtools}
\let\labelindent\relax
\usepackage{enumitem}

\usepackage{lipsum}

\usepackage{tikz}
\usetikzlibrary{positioning, calc,arrows.meta}

\usepackage[absolute,overlay]{textpos} 

\usepackage{cuted}
\usepackage{dblfloatfix} 

\newcommand{\Rankf}[1]{\operatorname{rank}(#1)}

\newcommand{\Dim}[1]{\mathrm{dim}(#1)}

\newcommand{\D}{\mathrm{d}}
\newcommand{\Lie}{\mathrm{L}}

\newcommand{\X}{\mathcal{X}}

\newcommand{\TX}{\mathcal{T}(\mathcal{X})}

\newcommand{\Spanf}[1]{\operatorname{span}\{#1\}}
\newcommand{\Spand}[1]{\mathrm{span}\left\{#1\right\}}

\newcommand{\ddiff}[1]{d_{\operatorname{diff}}(#1)}
\newcommand{\idxsum}[1]{\vert #1 \vert}

\newcommand{\pad}[1]{\partial_{#1}}

\theoremstyle{plain}      
\newtheorem{theorem}{Theorem}
\newtheorem{lemma}{Lemma}
\newtheorem{corollary}{Corollary}

\theoremstyle{definition} 
\newtheorem{definition}{Definition}
\newtheorem{example}{Example}

\theoremstyle{remark}     
\newtheorem{remark}{Remark}

\begin{document}

\title{ \textbf{On the Linearization of Flat Multi-Input
Systems via Prolongations}
\thanks{This work has been submitted to the IEEE for possible publication. Copyright may be transferred without notice, after which this version may no longer be accessible.}
\thanks{This research was funded in whole, or in part, by
the Austrian Science Fund (FWF) P36473. For the purpose of
open access, the author has applied a CC BY public copyright
licence to any Author Accepted Manuscript version arising
from this submission.}}
 
\author{Georg Hartl, Conrad Gstöttner, and Markus Schöberl
\thanks{All authors are with the Institute of Control
Systems, Johannes Kepler University Linz, Altenberger
Strasse 69, 4040 Linz, Austria,
\mbox{E-mail:}~{\tt \{georg.hartl, conrad.gstoettner,
markus.schoeberl\}@jku.at}.}}
 
\maketitle

\begin{abstract}
We examine when differentially flat nonlinear control systems
with more than two inputs can be rendered static feedback
linearizable by a minimal number of
prolongations of suitably chosen inputs after
applying a static input transformation. We derive
sufficient conditions that guarantee such prolongations yield
a static feedback linearizable system. For $(x,u)$-flat two-input systems, 
prior work established precise links between the relative degrees,
the highest derivative orders occurring in the flat
parameterization, and the minimal dimension of a linearizing
dynamic extension, leading to necessary and sufficient
criteria for flatness of systems that become static feedback linearizable
after at most two prolongations of such suitably chosen
inputs. Building on the structure of the time derivatives 
of a flat output, this work extends this analysis to 
systems with three inputs.
\end{abstract}
 
 
\section{Introduction}
 
A central objective in nonlinear control theory is to
characterize systems that can be transformed into canonical
forms such as chains of integrators using suitable state and
input transformations. In this context,
\emph{differential flatness} is
a fundamental 
concept~\cite{fliess_flatness_1995,fliess_lie-backlund_1999}. 
A nonlinear control system
\begin{equation}\label{eq:f_xu_m_inputs}
    \dot x = f(x,u)
\end{equation}
with $n$ state variables $x$ and $m$ control inputs $u$ is
called differentially flat, or simply flat, if there exists
an $m$-tuple of differentially independent functions
\begin{equation}\label{eq:flat_output_intro}
    y=\varphi(x,u,u^{(1)},\ldots,u^{(\nu)}),
\end{equation}
where $u^{(\nu)}$ denotes the $\nu$-th time derivative
of $u$, such that
\begin{equation}\label{eq:flat_para_intro}
    (x,u)=F(y, y^{(1)}, \ldots, y^{(r)}).
\end{equation}
This means both $x$ and $u$ can be expressed via the
so-called \emph{flat parameterization}~\eqref{eq:flat_para_intro}
in terms of the \emph{flat output}~\eqref{eq:flat_output_intro}
and finitely many of its time derivatives. 
A system~\eqref{eq:f_xu_m_inputs} admitting
a flat output of the form $\varphi(x)$ is called
$x$-flat, and a system admitting a flat output of the
form $\varphi(x,u)$ is called \mbox{$(x,u)$-flat}.\footnote{Every
$x$-flat system is in particular $(x,u)$-flat.}
The flatness property
enables systematic solutions to generally non-trivial tasks
such as trajectory planning and
tracking~\cite{fliess_flatness_1995,fliess_lie-backlund_1999}. 

In flatness-based
tracking control design, one typically applies an endogenous
feedback that exactly linearizes the system such that the
closed-loop input-output dynamics reduce to $m$ integrator
chains~\mbox{$y_j^{(l_j)} = w_j$},
\mbox{$j=1,\ldots,m$}, with respective lengths $l_j$,
between a new input $w$ and the flat output $y$.
 
After three decades of substantial progress in flatness
theory, computing a flat output remains a challenging task
without a general solution, as highlighted, e.g.,
by~\cite{gstottner_structurally_2022,
gstottner_necessary_2023, nicolau_flatness_2017,
nicolau_normal_2019, schoberl_implicit_2014}. A well-known
subclass of flat systems with a comprehensive
solution~\cite{jakubczyk_linearization_1980} are
\emph{static feedback linearizable} (SFL) systems, i.e.,
systems that can be exactly linearized by a static state
feedback $u=\alpha(x,w)$.%
\footnote{We use \emph{SFL} as an abbreviation for
\emph{static feedback linearization} and, by extension, for
the associated notion of being \emph{statically feedback
linearizable} and the corresponding property of
\emph{static feedback linearizability}.}
The corresponding flat output is referred to as a
\emph{linearizing output}.
 
In general, flat systems admit exact linearization by
\emph{endogenous dynamic feedback}---roughly speaking, an
endogenous feedback involves only system variables, i.e.,
states, inputs, and time derivatives of inputs---which can
be interpreted as static feedback linearization of a suitable
dynamic extension. A well-studied class of endogenous dynamic
feedback laws arises from \emph{original input prolongations},
i.e., dynamic extensions obtained by finitely many derivatives
of the original control inputs. Flat systems that become SFL
after a finite number of such input prolongations have been
investigated in depth,
see~\cite{charlet_sufficient_1991, sluis_bound_1996,
battilotti_constructive_2004, franch_linearization_2005}.
Necessary and sufficient conditions for this property were
recently established in~\cite{levine_differential_2025}.
Taking a more general perspective, one may ask whether a
system can be rendered SFL by prolongations of selected
components of a suitably transformed input
\(\hat u=\phi(x,u)\). We refer to this problem simply as
\emph{static feedback linearizability via prolongations}.
 
The minimal order of a dynamic extension required to render
a system SFL with $\varphi$ as a linearizing output is called
the \emph{differential difference of}~$\varphi$, denoted
by~$d_{\mathrm{diff}}(\varphi)$. The differential difference
of a system, simply denoted by $d_{\mathrm{diff}}$, is the
smallest differential difference among all possible flat
outputs. As shown in \cite{nicolau_flatness_2017}, every flat
system with $d_{\mathrm{diff}}=1$ becomes SFL after a
one-fold prolongation of a suitably chosen input.
Control-affine systems with differential difference of one
have been fully characterized with respect to flatness
in \cite{nicolau_flatness_2017}.%
\footnote{This result extends to general nonlinear systems,
as prolonging all inputs preserves the differential difference
while yielding a control-affine system.}
 
Previous work shows that every $(x,u)$-flat system with two
inputs becomes SFL after $d_{\mathrm{diff}}$-fold
prolongations of a suitably chosen control
input \cite{gstottner_linearization_2020}, directly linking
the state dimension, relative degrees, highest derivative
orders in the flat parameterization, and the differential
difference. By additionally establishing procedures to
identify such suitable input transformations without
explicitly requiring a flat output, it has been shown that
the gap to SFL systems can be systematically reduced.
Building on these results, necessary and sufficient
conditions have been established for flatness of two-input
systems with $d_{\mathrm{diff}}\leq2$
\cite{nicolau_flatness_2016, gstottner_necessary_2023}.
Motivated by these developments, we study static feedback
linearizability via prolongations for systems with multiple
(i.e., more than two) inputs, and derive particular results for 
the $(x,u)$-flat three-input case.
Our contributions are:
\begin{enumerate}[label=\Alph*)]
    \item For multi-input systems~\eqref{eq:f_xu_m_inputs} with
    a flat output~\eqref{eq:flat_output_intro}, we derive sufficient
    conditions for \mbox{$\varphi$-SFL} via a minimal
    number of prolongations.
    \item For three-input systems with an $(x,u)$-flat output, 
    we describe the structure of the time derivatives of
    the flat-output components under suitable input
    transformations and characterize
    \mbox{$\varphi$-SFL} via minimal prolongations in
    terms of the relative degrees, the highest derivative orders
    of the flat parameterization, and the differential
    difference.
    \item Building on these results, we present a
    procedure to verify whether a given candidate triple
    \mbox{$\varphi(x,u)=(\varphi^1,
    \varphi^2, \varphi^3)$}
    is a valid flat output.
\end{enumerate}
 
Our paper is structured as follows:
Section \ref{sec:notation} introduces notation and
terminology followed by Section \ref{sec:Known_Results}
recalling key results for SFL of two-input systems. Our main
results are shown in Section~\ref{sec:main_results} 
followed by Section \ref{sec:examples}
presenting illustrative examples.
Section \ref{sec:conclusion} gives a conclusion and an
outlook. Detailed proofs are presented
in the appendix.

\section{Notation and Terminology}
\label{sec:notation}
This work utilizes tensor notation and the Einstein summation 
convention. We drop the index range when evident from the context.
Consider an $n$-dimensional smooth manifold 
$\mathcal{X}$ with local coordinates~\mbox{$x=(x^1, \ldots, x^n)$}. By $\partial_x h$, where $h$ is an
$m$-tuple of functions~\mbox{$h=(h^1, \ldots, h^m):
\mathcal{X} \rightarrow \mathbb{R}^m$},
we indicate the $m \times n$ Jacobian matrix,
and $\partial_{x^i}h^j$ represents the partial derivative of 
$h^j$ with respect to $x^i$. The notation $\D h$ represents 
the differentials~\mbox{$(\D h^1, \ldots, \D h^m)$}. 
The Lie derivative of a scalar 
function $h^j$ along a vector field $v \in \TX$, where $\TX$ represents
the tangent bundle of $\X$, is written as $\Lie_vh^j$.
We indicate time derivatives using subscripts in square brackets.
To define the $\alpha$-th time derivative of the $j$-th component
of $y$, we use $y^j_{[\alpha]}$, 
whereas~\mbox{$y_{[\alpha]} = (y^1_{[\alpha]}, \dots ,y^m_{[\alpha]})$}.
To abbreviate the notation for time derivatives of various orders within a tuple, we use multi-indices denoted by capital Latin letters.
Consider two multi-indices 
\mbox{$A = (a^1, \dots, a^m)$} and~\mbox{$B=(b^1, \dots, b^m)$}
and assume that $A \leq B$, i.e., $a^j \leq b^j$, 
for each $j\in\{1,\ldots,m\}$. 
Using $A$ and $B$, we get the abbreviations 
\mbox{$y_{[A]} = (y^1_{[a^1]}, \dots,y^m_{[a^m]})$},
\mbox{$y_{[0, A]} = ( y^1_{[0,a^1]}, \dots,y^m_{[0,a^m]} )$},
and \mbox{$y_{[A, B]} = ( y^1_{[a^1,b^1]}, \dots,y^m_{[a^m,b^m]} )$},
where \( y^j_{[a^j, b^j]} \) represents successive time derivatives 
of $y^j$ given by~\mbox{\( y^j_{[a^j, b^j]} 
= ( y^j_{[a^j]}, \dots, y^j_{[b^j]}) \)}. If~\mbox{$a^j > b^j$}, 
then \mbox{$y^j_{[a^j, b^j]}$} is empty. Multi-indices 
are added and subtracted componentwise, i.e.,
~\mbox{$A \pm B = ( a^1\pm b^1, \dots, a^m \pm b^m )$}.
Adding or subtracting an integer $c$ to a multi-index $A$ is written 
by~\mbox{$A \pm c = ( a^1\pm c, \dots, a^m \pm c )$}.
The summation over the indices is given by
$\idxsum{A} = \sum_{j=1}^m a^j$. Unless stated otherwise, 
any given transformation is considered invertible. Throughout, we assume 
that all functions, vector fields and differentials are smooth.
We consider generic points only.

\subsection{Differential Flatness}
Following the differential geometric framework of,
e.g., \cite{kolar_properties_2016}, we consider the
extended state-input manifold
\mbox{$\X \times \mathcal{U}_{[0,l_u]}$} with local
coordinates \mbox{$(x,u_{[0,l_u]})$}. The integer $l_u$ is
chosen sufficiently large so that time derivatives of
all relevant functions along trajectories
of~\eqref{eq:f_xu_m_inputs} can be computed as Lie
derivatives along the vector field
\begin{equation}\label{eq:extended_vector_field}
f_u = f^i(x,u)\partial_{x^i}
+ \sum_{\alpha=0}^{l_u-1}
u^j_{[\alpha+1]}\partial_{u^j_{[\alpha]}}.
\end{equation}

\begin{definition}\label{def:diff_flatness}
A nonlinear system~\eqref{eq:f_xu_m_inputs} is called
differentially flat if there exist $m$ smooth functions
\begin{equation}\label{eq:flat_output}
    y=\varphi(x,u_{[0,Q]}),
\end{equation}
defined on $\X \times \mathcal{U}_{[0,l_u]}$, such
that 
\begin{equation}
    \label{eq:flat_parametrization}
    \begin{alignedat}{2}
        x^i & = F_x^i(y_{[0,R-1]}),
        & \hspace{3em} i&=1,\ldots,n, \\
        u^j & = F_u^j(y_{[0,R]}),
        &  j&=1,\ldots,m,
    \end{alignedat}
\end{equation}
with smooth functions $F^i_x$ and $F^j_u$. The
multi-index \mbox{$R=(r^1,\ldots,r^m)$} denotes the highest
derivative orders of $y$ appearing
in~\eqref{eq:flat_parametrization}. The $m$-tuple
$\varphi$ is called a flat output of
system~\eqref{eq:f_xu_m_inputs}, and $Q$ specifies the
highest derivative orders of $u$
in~\eqref{eq:flat_output}.
\end{definition}

An important consequence of Definition \ref{def:diff_flatness}, 
detailed in \cite{kolar_properties_2016}, is the linear 
independence of the differentials $\D \varphi, \D \varphi_{[1]}, 
\ldots, \D \varphi_{[\beta]}$ for arbitrary differentiation 
order $\beta$. This property ensures that the parameterization 
\eqref{eq:flat_parametrization} and the multi-index $R$ 
associated with a given flat output~\eqref{eq:flat_output} 
are locally unique. Moreover, the mapping \mbox{$(F_x,F_u):
\mathbb{R}^{\idxsum{R}+m} \to \mathbb{R}^{n+m}$} possesses 
the submersion property. By inserting \mbox{$y_{[0,R]}
=\varphi_{[0,R]}$} into~\eqref{eq:flat_parametrization} and 
taking the exterior derivative, we additionally find that 
flatness implies
\begin{subequations}\label{eq:imp_exterior_deriv}
\begin{align}
    \Spand{\D x} & 
    \subset \Spand{\D \varphi_{[0,R-1]}}, 
    \label{eq:imp_exterior_deriv_x} \\
    \Spand{\D u} & 
    \subset \Spand{\D \varphi_{[0,R]}} . 
    \label{eq:imp_exterior_deriv_u}
\end{align}
\end{subequations}

Given an $(x,u)$-flat output, we 
characterize each component $\varphi^j$ by its relative degree $k^j$.
The relative degrees \mbox{$K=(k^1,\ldots,k^m)$} are defined by
\vspace{-1ex}
\begin{equation*}
    \Lie_{f_u}^{k^j-1}\varphi^j = 
    \varphi^j_{[k^j-1]}(x), 
    \quad 
    \Lie_{f_u}^{k^j}\varphi^j = \varphi^j_{[k^j]}(x,u)
\end{equation*}
using~\eqref{eq:extended_vector_field}. Note that for any 
component $\varphi^j(x,u)$ that already depends explicitly 
on some input component, the corresponding relative degree 
is given by $k^j=0$.

\subsection{Static Feedback Linearizability}
A system of the form~\eqref{eq:f_xu_m_inputs} with a flat 
output $\varphi(x)$ is 
SFL if and only if 
$\idxsum{K}=n$. Then $R$ and $K$ coincide, 
and~\eqref{eq:flat_parametrization} constitutes a 
diffeomorphism. For SFL systems, the state and input 
transformation $(\hat x, \hat u) = (\varphi_{[0,K-1]}(x), 
\varphi_{[K]}(x,u))$ maps the system into $m$ decoupled 
integrator chains with $\varphi$ being the so-called 
\emph{linearizing output}.

\subsection{Static Feedback Linearizability via Dynamic Extensions}

Every flat system can be rendered static feedback linearizable 
via dynamic extensions. For a given flat output $\varphi$,
the flat parameterization $F_x$ is a submersion, and can thus 
be extended to a diffeomorphism \mbox{$(F_x, F_z): 
\mathbb{R}^{\idxsum{R}} \to \mathbb{R}^{\idxsum{R}}$} by 
adjoining $\idxsum{R} - n$ functions \mbox{$z = F_z(y_{[0,R-1]})$}.
The quantity
\begin{equation*}
    d_{\mathrm{diff}}(\varphi) = \idxsum{R} - n = \Dim{z}    
\end{equation*}
is called the \emph{differential difference} of 
$\varphi$~\cite{gstottner_necessary_2023}. Accordingly, the 
differential difference is the minimal dimension of a 
dynamic extension that renders a flat system SFL with its 
given flat output $\varphi$ as a linearizing output.
Additionally introducing the new input $v = y_{[R]}$ yields 
the diffeomorphism \mbox{$\Psi: (x,z,v) = (F_x(y_{[0,R-1]}), 
F_z(y_{[0,R-1]}), y_{[R]})$} with its inverse $\Psi^{-1}$.
Thereby, we obtain the dynamic extension
\begin{equation*}
\dot{z} = F_{z,[1]}(y_{[0,R]}) \circ \Psi^{-1}(x,z,v), \quad 
u = F_u \circ \Psi^{-1}(x,z,v),
\end{equation*}
yielding a linear input-output behavior $y_{[R]} = v$ 
of the closed-loop system
\begin{equation*}
\dot{x} = f(x, F_u \circ \Psi^{-1}(x,z,v)), \hspace{0.5em} 
\dot{z} = F_{z,[1]}(y_{[0,R]}) \circ \Psi^{-1}(x,z,v).
\end{equation*}

\subsection{Static Feedback Linearizability via Prolongations}

A central question for flat systems is whether they can 
be exactly linearized via static feedback after prolongations 
of suitably chosen control inputs.
We capture this property in the following definition.
\begin{definition}\label{def:SFL}
    Consider a system~\eqref{eq:f_xu_m_inputs} with a 
    flat output~\eqref{eq:flat_output}. The given system is 
    \textit{$\varphi$-SFL via prolongations} if there exist an 
    invertible static input transformation
    \begin{equation}\label{eq:def_1_input_trf}
        (\hat u_1, \hat u_2) = \Phi_{\hat u}(x,u),
    \end{equation}
    where $\Dim{\hat u_1}=m_1$, and 
    a multi-index~\mbox{$D=(d^1,\ldots,d^{m_1})$}, with each 
    $d^j >0$, such that the extended system
    \begin{equation}\label{eq:f_xu_extended}
        \begin{aligned}
            \dot x = \hat{f}(x,\hat u_1, \hat u_2),\; 
            \dot {\hat u}_1 = \hat u_{1,[1]},\; \ldots \;
            \dot {\hat u}_{1,[D-1]} = \hat u_{1,[D]}
        \end{aligned}
    \end{equation}
    is SFL with the given flat output $\varphi$ being a 
    linearizing output. If $\idxsum{D} = d_{\mathrm{diff}}(\varphi)$ 
    holds additionally, then we call the given system 
    $\varphi$\textit{-SFL via minimal prolongations}.
\end{definition}

\begin{remark}\label{rem:minimal_prolongations_characterization}
    In a general dynamic extension, the variables 
    $z = F_z(y_{[0,R-1]})$ can be arbitrary functions 
    of $y_{[0,R-1]}$. For prolongations, however, these variables 
    are constrained to be derivatives of transformed inputs, i.e., 
    $z = \hat{u}_{1,[0,D-1]}$. Minimality then requires that 
    the parameterization of the prolonged inputs 
    $\hat{u}_{1,[0,D-1]} = F_{\hat u_1, [0,D]}(y_{[0,R-1]})$ still 
    depends only on $y_{[0,R-1]}$. In contrast, non-minimal 
    prolongations would imply that $\hat{u}_{1,[0,D-1]} 
    = F_{\hat{u}_1,[0,D]}(y_{[0,R-1+k]})$ with some $k > 0$, 
    thereby increasing the required derivative order of 
    the flat output to parameterize the extended system.
\end{remark}

The following example demonstrates how the choice 
of~\eqref{eq:def_1_input_trf} influences the number of prolongations
required to render a system SFL with respect to its given 
flat output $\varphi$.

\begin{example}
    Consider the system
    \begin{equation*}\label{eq:example_1}
        \begin{alignedat}{2}
            \dot x^1 & = x^3u^1 + \frac{x^1u^2}{x^3}, & \hspace{2em} 
            \dot x^4 & =  \frac{x^5}{(x^6)^2} + \frac{u^1}{x^6} 
            - \frac{x^4u^2u^3}{ x^6 }, \\[-1ex]
            \dot x^2 & = x^3 + u^1, &  
            \dot x^5 & = (x^6)^2 + x^6u^2 + \frac{x^5}{x^6}u^2u^3, \\[-1ex]
            \dot x^3 & = u^2, &  \dot x^6 & =u^2u^3 .
        \end{alignedat}
    \end{equation*}
    
    In the following, we consider the non-minimal flat output 
    \mbox{$y = \varphi(x)=(\frac{x^1}{x^3},x^2,x^4x^6)$} with 
    relative degrees \mbox{$K=(1,1,1)$}. The corresponding flat 
    parameterization has the multi-index $R=(2,3,3)$, and thus the 
    differential difference is \mbox{$d_{\mathrm{diff}}(\varphi)=2$}.
    Given the flat parameterization
    \begin{equation*}\label{eq:flat_para_exp_1}
        \begin{aligned}
            x & = F_x (y^1_{[0,1]}, y^2_{[0,2]}, y^3_{[0,2]}), \\
            u^1 &= y^1_{[1]}, \quad u^2 = y^2_{[2]} - y^1_{[2]}, \quad 
            u^3 = F_{u^3}(y^1_{[2]},y^2_{[2,3]},y^3_{[3]}),  
        \end{aligned}
    \end{equation*}
    it follows that $F_x$ extended by $u^1 = y^1_{[1]}$,
    $u^1_{[1]} = y^1_{[2]}$, and \mbox{$u^2 = y^2_{[2]} - y^1_{[2]}$},
    corresponding to a two-fold prolongation of $u^1$ and a 
    one-fold prolongation of $u^2$, becomes a diffeomorphism. 
    The extended system is then of the form~\eqref{eq:f_xu_extended} 
    with $\hat u_1=(u^1,u^2)$ and $D=(2,1)$. 
    Since \mbox{$\idxsum{D}=3>\ddiff{\varphi}$}, 
    the proposed prolongations are not of minimal order. On the other hand, 
    applying the input transformation \mbox{$\hat u^1=\varphi^2_{[1]}(x,u)=x^3+u^1$} 
    and extending $F_x$ by \mbox{$\hat u^1 = y^2_{[1]}$} and 
    $\hat u^1_{[1]} = y^2_{[2]}$ also results in a diffeomorphism.
    The extended system is then of the form~\eqref{eq:f_xu_extended} 
    with \mbox{$\hat u^1=\varphi^2_{[1]}(x,u)$} and $D=d^1=2=\ddiff{\varphi}$.
    Hence, the system is \mbox{$\varphi$-SFL} via minimal prolongations.
    Observe that \mbox{$\hat u^1 = y^2_{[1]} = y^2_{[r^2-d^1]}$},
    which, in accordance with Remark~\ref{rem:minimal_prolongations_characterization},
    ensures that a $d^1$-fold prolongation of $\hat u^1$ does not 
    increase the highest derivative order $R$.
\end{example}

\section{Known Results for Two-Input Systems}
\label{sec:Known_Results}

We now focus on $(x,u)$-flat systems with two inputs,
recalling structural results from \cite{gstottner_linearization_2020}.
Consider a nonlinear control system $\dot x = f(x,u^1,u^2)$ with 
a flat output \mbox{$y=(\varphi^1(x,u),\varphi^2(x,u))$} characterized 
by the multi-index \mbox{$R=(r^1,r^2)$} of the corresponding flat 
parametrization and its relative degrees $K=(k^1,k^2)$. By applying 
an input transformation of the form \mbox{$(\hat{u}^1, \hat{u}^2) 
= (\varphi^1_{[k^1]}(x,u), u^2)$}, relabel the input components 
if necessary, the derivatives of both flat-output components 
up to order $R$ exhibit the specific structure
\begin{equation}\label{eq:derivative_structure_two_inputs}
\arraycolsep=0pt
    \begin{array}{rclcrcl}
        y^1_{[0,k^1-1]} & = & \varphi^1_{[0,k^1-1]}(x), & \hspace{0.5em} & 
        y^2_{[0,k^2-1]} & = & \varphi^2_{[0,k^2-1]}(x), \\[1ex]
        y^1_{[k^1]} & = & \hat{u}^1, && 
        y^2_{[k^2]} & = & \varphi^2_{[k^2]}(x,\hat{u}^1), \\[-0.5ex]
        & \vdots & & & & \vdots & \\[-0.5ex]
        y^1_{[r^1-1]} & = & \hat{u}^1_{[r^1-k^1-1]}, && 
        y^2_{[r^2-1]} & = & \varphi^2_{[r^2-1]}(x, \hat{u}^1_{[0,r^2-k^2-1]}), \\[1ex]
        y^1_{[r^1]} & = & \hat{u}^1_{[r^1-k^1]}, && 
        y^2_{[r^2]} & = & \varphi^2_{[r^2]}(x, \hat{u}^1_{[0,r^2-k^2]}, \hat{u}^2) \; ,
    \end{array}
\end{equation}
revealing that $\varphi^2_{[k^2,r^2]}$ depend on $x$ and 
time derivatives of $\hat{u}^1$ up to order $r^2-k^2$,
with $\hat{u}^2$ appearing exclusively in $\varphi^2_{[r^2]}$.
As established in \cite{gstottner_linearization_2020}, 
the following identities hold:
\begin{subequations}\label{eq:index_relations}
    \begin{equation}\label{eq:diff_equality}
        r^1-k^1 = r^2-k^2 = d_{\mathrm{diff}}(\varphi) ,
    \end{equation}
    \begin{equation}\label{eq:dimension_relations}
        r^1 + k^2 = n, \quad 
        r^2 + k^1 = n, \quad 
        n - k^1 - k^2 = d_{\mathrm{diff}}(\varphi) \; .
    \end{equation}
\end{subequations}

These algebraic constraints connect the system dimension $n$,
the relative degrees $(k^1,k^2)$, the highest derivative 
orders $(r^1,r^2)$, and the differential difference 
$\ddiff{\varphi}$. Consequently, the 
structure~\eqref{eq:derivative_structure_two_inputs} establishes 
a diffeomorphism, implying that every $(x,u)$-flat two-input 
system is \mbox{$\varphi$-SFL} via minimal prolongations. Moreover, 
the structure~\eqref{eq:derivative_structure_two_inputs} together 
with the relations~\eqref{eq:index_relations} have proven valuable 
in several contexts: deriving necessary and sufficient flatness 
conditions for systems with $d_{\mathrm{diff}} \leq 2$~\cite{gstottner_necessary_2023}, 
designing flatness-based tracking controls using quasi-static feedback 
of the original state $x$~\cite{gstottner_linearization_2020,gstottner_tracking_2024},
and constructing algorithms to identify flat-output 
components~\cite{HartlTriangularFormsXFlat2025}.
Motivated by these applications, the present work is devoted to 
extending the structural results~\eqref{eq:derivative_structure_two_inputs} 
and~\eqref{eq:index_relations} to systems with more than two inputs,
focusing particularly on three-input systems.
However, as the following example shows, such a generalization is 
not straightforward \cite{gstottner_analysis_2023}.

\vspace{-0.5ex}
\begin{example}\label{ex:three_input_comparison}
    Consider the three-input systems
    \begin{equation*}\label{eq:three_input_systems}
        \begin{aligned}
            &\text{(I)} & \hspace{0em}  
            \dot x^1 &= u^1, & 
            \dot x^2 &= u^2, & 
            \dot x^3 &= x^4 + u^1, & 
            \dot x^4 &= u^3, \\[1ex]
            &\text{(II)} & \hspace{0em} 
            \dot x^1 &= u^1, & 
            \dot x^2 &= u^2, & 
            \dot x^3 &= x^4 + u^1 + u^2, & 
            \dot x^4 &= u^3.\vspace{-1ex}
        \end{aligned}
    \end{equation*}
    Both systems have $n=4$ states and the flat output 
    \mbox{$y = (x^1, x^2, x^3)$} with relative degrees $K=(1,1,1)$.
    However, for system (I) we have $R = (2,1,2)$ 
    and thus \mbox{$d_{\mathrm{diff}}(y) = 1$}, whereas for system (II) 
    we have $R = (2,2,2)$ and thus \mbox{$d_{\mathrm{diff}}(y) = 2$}.
    This demonstrates that, unlike in the two-input case,
    the differential difference is not uniquely determined by 
    the system dimension and relative degrees alone.
\end{example}
\vspace{-2ex}

\begin{figure*}[!b]
\hrulefill
\normalsize
\begin{equation}\label{eq:deriv_form_case_rank1}
    \tag{$DS_3$}
    \begin{alignedat}{3}
        y^{1}_{[0,k^1-1]} & = \varphi^{1}_{[0,k^1-1]}(x) & \hspace{1em} 
        y^{2}_{[0,k^2-1]} & = \varphi^{2}_{[0,k^2-1]}(x) & \hspace{1em} 
        y^{3}_{[0,k^3-1]} & = \varphi^{3}_{[0,k^3-1]}(x) \\[-0.3ex]
        y^{1}_{[k^1]} & = \hat u^1_{} &  
        y^{2}_{[k^2]} & 
        = \varphi^{2}_{[k^2]}( x, \hat u^1 ) & 
        y^{3}_{[k^3]} & 
        = \varphi^{3}_{[k^3]}( x, \hat u^1 ) \\[-0.6ex]
        & \hspace{0.5em} \vdots && \hspace{0.5em} \vdots 
        && \hspace{0.5em} \vdots \\[-0.6ex]
        & & 
        y^{2}_{[p^2]} & 
        = \varphi^2_{[p^2]}(x, \hat u^1_{[0,p^2-k^2]}, u^2) & 
        y^{3}_{[p^3]} & 
        = \varphi^{3}_{[p^3]}( x, \hat u^1_{[0,p^3-k^3]}, u^2) \\[-0.6ex]
        & \hspace{0.5em} \vdots && \hspace{0.5em} \vdots 
        && \hspace{0.5em} \vdots \\[-0.6ex]
        & & 
        y^{2}_{[p^2+s]} & = 
        \varphi^2_{[p^2+s]}(x, \hat u^1_{[0,p^2-k^2+s]}, u^2_{[0,s]}, u^3) & 
        y^{3}_{[p^3+s]} & = 
        \varphi^{3}_{[p^3+s]}( x, \hat u^1_{[0,p^3-k^3+s]}, u^2_{[0,s]}, u^3) \\[-0.6ex]
        & \hspace{0.5em} \vdots && \hspace{0.5em} \vdots 
        && \hspace{0.5em} \vdots \\[-0.6ex]
        y^{1}_{[r^1-1]} & = \hat u^1_{[r^1-k^1-1]} & 
        y^{2}_{[r^2-1]} & = 
        \varphi^2_{[r^2-1]}(x, \hat u^1_{[0,r^2-k^2-1]}, & 
        y^{3}_{[r^3-1]} & = 
        \varphi^{3}_{[r^3-1]}( x, \hat u^1_{[0,r^3-k^3-1]}, \\[-0.3ex]
        & & & 
        \hspace{1.5em} u^2_{[0,r^2-p^2-1]}, u^3_{[0,r^2-p^2-s-1]}) & & 
        \hspace{1.5em} u^2_{[0,r^3-p^3-1]}, u^3_{[0,r^3-p^3-s-1]} ) \\[-0.3ex] 
        y^{1}_{[r^1]} & = \hat u^1_{[r^1-k^1]} & 
        y^{2}_{[r^2]} & = 
        \varphi^2_{[r^2]}(x, \hat u^1_{[0,r^2-k^2]}, & 
        y^{3}_{[r^3]} & = 
        \varphi^{3}_{[r^3]}( x, \hat u^1_{[0,r^3-k^3]}, \\[-0.3ex]
        & & & \hspace{1.5em} u^2_{[0,r^2-p^2]}, u^3_{[0,r^2-p^2-s]}) & & 
        \hspace{1.5em} u^2_{[0,r^3-p^3]}, u^3_{[0,r^3-p^3-s]}  )  \\ 
    \end{alignedat}
\end{equation}
\end{figure*}

\section{Main Results}
\label{sec:main_results}
In this section, we first establish sufficient conditions for general 
flat multi-input systems to be \mbox{$\varphi$-SFL} via minimal 
prolongations, and then specialize to $(x,u)$-flat three-input 
systems where a more refined analysis is possible.

\subsection{SFL of Flat Multi-Input Systems}

First, we present sufficient conditions for \mbox{$\varphi$-SFL} via 
minimal prolongations applicable to systems with an arbitrary 
number of inputs.
\begin{theorem}\label{thm:suff_gen_sys_linearizable}
    A system~\eqref{eq:f_xu_m_inputs} with a flat 
    output~\eqref{eq:flat_output} is \mbox{$\varphi$-SFL} via minimal 
    prolongations if there exists a partitioning of the components of 
    the flat output \mbox{$y=(\varphi_1(x,u),\varphi_2(x,u_{[0,Q]}))$} 
    with corresponding multi-indices $K=(K_1,K_2)$ and $R=(R_1, R_2)$,
    such that the $m_1$ components $\varphi_1$ satisfy 
    \mbox{$\Rankf{\pad{u}\varphi_1(x,u)}=m_1$}, $R_1 > K_1$ and 
    \mbox{$\vert R_1 \vert - \vert K_1 \vert = \ddiff{\varphi}$}.
    Applying any invertible input transformation of the form 
    \mbox{$(\hat u_1, \hat u_2)=(\varphi_{1,[K_1]}(x,u),\Phi_{\hat u_2}(x,u))$} 
    with a subsequent prolongation of $\hat u_1$ of order $D=R_1-K_1$ yields 
    a system of the form~\eqref{eq:f_xu_extended} that is SFL.
\end{theorem}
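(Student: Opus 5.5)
The plan is to show that, after the transformation and the prolongation, the extended state $(x,\hat u_{1,[0,D-1]})$ and the flat-output derivatives $y_{[0,R-1]}$ determine each other diffeomorphically. Once that holds, $\varphi$ is a function of the extended state whose relative degrees sum to the extended state dimension, and the SFL criterion of the paper applies.

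\emph{Step 1 (admissible transformation and structure of $\varphi_1$).} Since $\varphi_1=\varphi_1(x,u)$ with $\Rankf{\pad{u}\varphi_1}=m_1$, every component of $\varphi_1$ depends explicitly on $u$. Hence $K_1=0$, and so $\varphi_{1,[K_1]}=\varphi_1(x,u)$. The rank condition guarantees that $\hat u_1=\varphi_{1,[K_1]}(x,u)$ can be completed by some $\hat u_2=\Phi_{\hat u_2}(x,u)$ to an invertible input transformation. The statement fixes an arbitrary such completion. In the new inputs, and along the prolongation of $\hat u_1$ of order $D=R_1-K_1$ (each $d^j>0$ because $R_1>K_1$), we get
\[
y_{1,[K_1+\alpha]}=\hat u_{1,[\alpha]},\qquad \alpha=0,\ldots,D .
\]
Consequently $\hat u_{1,[0,D-1]}=y_{1,[K_1,R_1-1]}$ and $\hat u_{1,[D]}=y_{1,[R_1]}$. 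Together with $x=F_x(y_{[0,R-1]})$ from \eqref{eq:flat_parametrization}, the extended state $\xi=(x,\hat u_{1,[0,D-1]})$ is therefore a function $G(y_{[0,R-1]})$ of $y_{[0,R-1]}$ only. This is exactly the situation described in Remark~\ref{rem:minimal_prolongations_characterization}. The dimensions also match:
\[
\Dim{\xi}=n+\idxsum{D}=n+\idxsum{R_1}-\idxsum{K_1}=n+\ddiff{\varphi}=\idxsum{R}.
\]

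\emph{Step 2 ($G$ is a diffeomorphism).} The extended system \eqref{eq:f_xu_extended} has inputs $(\hat u_{1,[D]},\hat u_2)$, and $\varphi$ is still a flat output for it, for two reasons. First, $u$ and its derivatives are functions of $(x,\hat u_1,\hat u_2)$ and their derivatives, hence of the extended state, the new inputs and their derivatives. Second, all extended variables are functions of $y$ and its derivatives: by Step~1 and \eqref{eq:flat_parametrization}, $\hat u_{1,[D]}=y_{1,[R_1]}$ and $\hat u_2=\Phi_{\hat u_2}(F_x,F_u)$ depend on $y_{[0,R]}$. By Step~1 the extended-state part of its flat parameterization is $G$. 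The submersion property of flat parameterizations recalled after Definition~\ref{def:diff_flatness} then makes $G:\mathbb{R}^{\idxsum{R}}\to\mathbb{R}^{\Dim{\xi}}$ a submersion. Because $\Dim{\xi}=\idxsum{R}$, it is a local diffeomorphism, so $y_{[0,R-1]}$ is a function of $\xi$.

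\emph{Step 3 (SFL).} By Step~2, each $y^j_{[0,r^j-1]}$, in particular $\varphi$ itself, is a function of the extended state alone. Its $r^j$-th derivative $y^j_{[r^j]}$ must depend on the extended inputs $(\hat u_{1,[D]},\hat u_2)$. Otherwise $y_{[R]}$ would be a function of $\xi$, and the independence of $\D\varphi_{[0,R]}$ would be contradicted since $\Dim{\xi}=\idxsum{R}$. Thus the relative degrees of $\varphi$ with respect to \eqref{eq:f_xu_extended} are exactly $R$, and their sum is $\idxsum{R}=\Dim{\xi}$. By the SFL characterization of Section~\ref{sec:notation}, the extended system is SFL with linearizing output $\varphi$. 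Since $\idxsum{D}=\ddiff{\varphi}$, the prolongations are minimal.

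The main obstacle is the justification in Step~2 that the extended system is flat with $\varphi$ and that $G$ is indeed its state parameterization, so that the submersion property may be invoked. The point needing care is that $\varphi_2$ may depend on $u_{[0,Q]}$, so one has to check that no derivative order beyond $R$ is needed to express the extended state. If that route becomes delicate, the fallback is a direct argument: show that the differentials $\D x,\D\hat u_{1,[0,D-1]}$ are independent as forms in $\D y_{[0,R-1]}$. This would use \eqref{eq:imp_exterior_deriv}, the independence of $\D\varphi_{[0,\beta]}$, and a count showing that a dependency would let $\ddiff{\varphi}$ drop below $\idxsum{R}-n$.
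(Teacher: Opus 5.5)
Your proposal is correct and follows the paper's route. Like the paper, you write the extended state $(x,\hat u_{1,[0,D-1]})=(F_x(y_{[0,R-1]}),y_{1,[K_1,R_1-1]})$ as a function of $y_{[0,R-1]}$ alone, use $n+\idxsum{D}=\idxsum{R}$, and conclude a local diffeomorphism; you also make explicit two steps the paper leaves implicit, namely the submersion argument that turns the dimension count into a diffeomorphism and the relative-degree argument that turns it into SFL. One adjustment: your inference $K_1=0$ relies only on the literal wording of the rank hypothesis, whereas the intended condition (cf.\ Corollary~\ref{cor:rank2_case} and the aerial manipulator with $K=(2,2,2)$) is $\Rankf{\pad{u}\varphi_{1,[K_1]}(x,u)}=m_1$; since nothing else in your argument uses $K_1=0$, that sentence should simply be dropped.
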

\begin{proof}[Proof of Theorem~\ref{thm:suff_gen_sys_linearizable}]
    Consider a system~\eqref{eq:f_xu_m_inputs} with a flat 
    output~\eqref{eq:flat_output} that meets 
    Theorem~\ref{thm:suff_gen_sys_linearizable}. Apply the input 
    transformation \mbox{$(\hat u_1, \hat u_2)=(\varphi_{1,[K_1]}(x,u), u_2)$},
    allow for permutations of the input components,
    and subsequently perform an \mbox{$(R_1-K_1)$-fold} prolongation 
    of the transformed input components $\hat u_1$. Expressing the 
    states $x_{ext}=(x, \hat u_{1,[0,R_1-K_1-1]})$ and inputs 
    $u_{ext}=(\hat u_{1,[R_1-K_1]},\hat u_2)$ of the extended system 
    in terms of the flat output yields the parameterization
    \vspace{-0.5ex}
    \begin{equation}\label{eq:thm_1_prf_extended}
    \begin{aligned}
        x & = F_x(y_{[0,R-1]}), \; 
        \hat u_{1,[0,R_1-K_1]} = y_{1,[K_1,R_1]}, \\
        \hat u_2 & = F_{\hat u_2}(y_{[0,R]})\, .
        \vspace{-0.5ex}
    \end{aligned}
    \end{equation}
    Note that despite the $(R_1-K_1)$-fold prolongation of 
    the input components $\hat u_1$ in~\eqref{eq:thm_1_prf_extended},
    there still only occur derivatives of $y$ up to the orders $R$.
    Given that \mbox{$\operatorname{dim}(x_{ext})=n+\idxsum{R_1}
    -\idxsum{K_1}=\idxsum{R}$} holds by assumption, it follows 
    that~\eqref{eq:thm_1_prf_extended} is a local diffeomorphism.
\end{proof}
\vspace{-1ex}
Theorem~\ref{thm:suff_gen_sys_linearizable} shows that if a suitable 
partition of the flat-output components exists, the corresponding 
flat-output derivatives directly provide the new inputs whose 
prolongations yield a minimal extension. For $(x,u)$-flat systems, 
this yields the following derivative structure.
\vspace{-1ex}
\begin{corollary}\label{cor:deriv_structure}
    For $(x,u)$-flat systems satisfying 
    Theorem~\ref{thm:suff_gen_sys_linearizable}, the derivatives of 
    the components of the flat output up to the orders $R_1$ and $R_2$ 
    are of the form
    \vspace{-0.5ex}
    \begin{equation}
        \begin{alignedat}{2}\label{eq:derivative_structure_m_inputs_suff}
            y_{1,[0,K_1-1]} & = \varphi_{1,[0,K_1-1]}(x), & \hspace{1em}  
            y_{2,[0,K_2-1]} & =  \varphi_{2, [0,K_2-1]}(x), \\
            y_{1,[K_1]} & = \hat u_1, &   
            y_{2,[K_2]} & =  \varphi_{2,[K_2]}(x,\hat u_{1}) , \\[-1ex]
            & \hspace{0.5em} \vdots && \hspace{0.5em} \vdots \\[-1ex]
            y_{1,[R_1-1]} & = \hat u_{1,[D-1]}, & 
            y_{2,[R_2-1]} & = \varphi_{2,[R_2-1]}(x, \\
            &&& \hspace{2em} \hat u_{1,[0,D-1]}), \\
            y_{1,[R_1]} & = \hat u_{1,[D]}, & 
            y_{2,[R_2]} & = \varphi_{2,[R_2]}(x, \\
            &&& \hspace{2em} \hat u_{1,[0,D]}, \hat u_{2}),
        \end{alignedat}
        \vspace{-0.5ex}
    \end{equation}
    with $D=R_1-K_1$ satisfying $\idxsum{D}=\ddiff{\varphi}$ and 
    $\Rankf{\pad{\hat u_2}\varphi_{2,[R_2]}}=m - m_1$.
\end{corollary}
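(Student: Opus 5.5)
The plan is to read off the structure directly from the proof of Theorem~\ref{thm:suff_gen_sys_linearizable}, now in the $(x,u)$-flat setting $\varphi=\varphi(x,u)$, and then use the diffeomorphism property of the extended parameterization to get the rank statement.

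\textbf{Left column.} By definition of the relative degrees, $y_{1,[0,K_1-1]}=\varphi_{1,[0,K_1-1]}(x)$ and $y_{1,[K_1]}=\varphi_{1,[K_1]}(x,u)$. The transformation $\hat u_1=\varphi_{1,[K_1]}(x,u)$ is admissible: the hypothesis $\Rankf{\pad{u}\varphi_1}=m_1$ forces $K_1=0$ componentwise, so $\hat u_1=\varphi_1(x,u)$ with full rank in $u$. Hence it can be completed to an invertible transformation by a suitable $\hat u_2$. Differentiating $y_{1,[K_1]}=\hat u_1$ along the prolonged system then gives $y_{1,[K_1+\alpha]}=\hat u_{1,[\alpha]}$ for $\alpha\le D=R_1-K_1$. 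The identity $\idxsum{D}=\ddiff{\varphi}$ is exactly the hypothesis of the theorem.

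\textbf{Right column.} The components $y_{2,[0,K_2-1]}$ depend only on $x$ by definition of $K_2$. From order $K_2$ on they depend on $(x,\hat u_1,\hat u_2)$ and, after further differentiation, on derivatives of $\hat u_1,\hat u_2$. To exclude $\hat u_2$ and its derivatives below order $R_2$, I would use the diffeomorphism from the proof of Theorem~\ref{thm:suff_gen_sys_linearizable}: the extended state $x_{ext}=(x,\hat u_{1,[0,D-1]})$ is a diffeomorphic image of $y_{[0,R-1]}$. So every $y_{2,[\beta]}$ with $\beta\le R_2-1$ is a function of $x_{ext}$, i.e.\ of $(x,\hat u_{1,[0,D-1]})$. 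Since these coordinates are independent of $\hat u_2$ and of the higher derivatives of the prolonged inputs on the extended manifold, and the functions are uniquely determined there, the orders up to $R_2-1$ follow. Differentiating once more along the extended vector field gives $y_{2,[R_2]}=\varphi_{2,[R_2]}(x,\hat u_{1,[0,D]},\hat u_2)$. The intermediate lines, with dependence on $\hat u_{1,[0,\beta-K_2]}$, come from the same argument: each derivative step raises the $\hat u_1$-order by one starting from $\hat u_1$ at order $K_2$. I would state this only in the weaker displayed form, bounded by $D-1$ or $D$.

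\textbf{Rank condition.} The extended system is SFL with linearizing output $y$ and $\idxsum{R}=\Dim{x_{ext}}$. Therefore the map $(x_{ext},u_{ext})\mapsto y_{[0,R]}$, with $u_{ext}=(\hat u_{1,[D]},\hat u_2)$, is a local diffeomorphism. Consider its Jacobian with respect to $u_{ext}$. Only $y_{1,[R_1]}=\hat u_{1,[D]}$ and $y_{2,[R_2]}$ depend on $u_{ext}$, and $y_{1,[R_1]}$ does not depend on $\hat u_2$. The Jacobian is therefore block triangular, so $\pad{\hat u_2}\varphi_{2,[R_2]}$ must have full rank $m-m_1$.

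\textbf{Main obstacle.} The delicate step is the right column below order $R_2$: arguing rigorously that the $y_{2,[\beta]}$, computed as Lie derivatives on the extended manifold, genuinely do not involve $\hat u_2$ or derivatives of $\hat u_1$ beyond order $D-1$. My first attempt would use the uniqueness of the parameterization, namely the linear independence of $\D\varphi_{[0,\beta]}$ recalled after Definition~\ref{def:diff_flatness}, together with \eqref{eq:imp_exterior_deriv}. The aim is to show that any such dependence would contradict $x_{ext}$ being a function of $y_{[0,R-1]}$ alone.
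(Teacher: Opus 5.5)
Your argument is essentially the paper's: the paper simply reads everything off from the local diffeomorphism \eqref{eq:thm_1_prf_extended}. That diffeomorphism also removes your ``main obstacle'', since inverting it expresses $\varphi_{[0,R-1]}$ identically as functions of $x_{ext}$ alone, and your block-triangular Jacobian argument is a correct, more explicit version of the rank claim. One correction: do not conclude $K_1=0$. The intended hypothesis is $\Rankf{\pad{u}\varphi_{1,[K_1]}(x,u)}=m_1$, as used in Corollary~\ref{cor:rank2_case}a) and in the aerial manipulator example with $K_1=(2,2)$; it gives the invertible completion of $\hat u_1=\varphi_{1,[K_1]}$ equally well and leaves the rest of your proof unchanged.
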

\vspace{-0.5ex}
\begin{proof}
    Corollary~\ref{cor:deriv_structure} follows directly 
    from~\eqref{eq:thm_1_prf_extended}.
\end{proof}
The derivative structure given above generalizes the two-input 
case~\eqref{eq:derivative_structure_two_inputs} to multi-input 
systems for suitable partitions of the flat-output components.
Note that not all components of $\hat u_1$ need to appear 
explicitly in $\varphi_{2,[K_2]}$. However, since $F_{\hat u_2}$ 
must depend explicitly on $y_{[R]}$ by~\eqref{eq:thm_1_prf_extended}, 
all of $\hat u_{1,[0,D]}$ must occur among the arguments of 
$\varphi_{2,[K_2,R_2]}$. 

\subsection{SFL of $(x,u)$-Flat Systems with Three Inputs}
\label{sec:flat_three_input}

Having treated the general multi-input case,
we now study \mbox{$\varphi$-SFL} of $(x,u)$-flat systems of 
the form 
\vspace{-0.5ex}
\begin{equation}\label{eq:f_xu_3_inputs}
    \dot x = f(x,u^1, u^2, u^3)\vspace{-0.5ex}
\end{equation}
with $n$ states and three inputs. Given a flat output 
\mbox{$y=\varphi(x,u)$, let $K=(k^1,k^2,k^3)$} denote its 
relative degrees and $R=(r^1,r^2,r^3)$ the highest derivative orders 
appearing in the corresponding flat parameterization. Since $\idxsum{K}=n$ 
would imply that the system is already \mbox{$\varphi$-SFL}, we assume 
$\idxsum{K}<n$ and $1 \leq \Rankf{\pad{u}\varphi_{[K]}}\leq 2$. The latter 
follows from the definition of flatness, specifically 
from~\eqref{eq:imp_exterior_deriv}.

\begin{theorem}\label{thm:rank1_case}
    Consider a system~\eqref{eq:f_xu_3_inputs} with a 
    flat output \mbox{$y=\varphi(x,u)$}. After a possible 
    relabeling of the inputs, let  
    \mbox{$\hat u^1 = \varphi^1_{[k^1]}(x,u)$} replace $u^1$.
    Then the following hold:
    
    \hspace{-1em} a) The derivatives $\varphi_{[0,R]}$
    take the form~\eqref{eq:deriv_form_case_rank1}.
    Here, after a possible relabeling of $u^2$ and $u^3$,
    $p^2$ and $p^3$ denote the smallest derivative orders
    at which $\varphi^j_{[p^j]}$, \mbox{$j\in\{2,3\}$},
    explicitly depend on $u^2$, and $s \geq 0$ is the
    smallest integer such that $\varphi^j_{[p^j+s]}$,
    $j\in\{2,3\}$, explicitly depend on $u^3$.
    
    \hspace{-1em} b) For the multi-indices $K$, $R$, and $(p^2, p^3)$ 
    it follows that
    \begin{enumerate}[label=\roman*)]
        \item among the three differences $r^i-k^i$, 
        $i \in \{1,2,3\}$, the two largest always coincide.
        
        \item $r^2-p^2 = r^3-p^3$.
        
        \item the differential difference and $\Dim{x}=n$ are given by
        \begin{equation*}
            d_{\mathrm{diff}}(\varphi) = (r^1-k^1)+(r^2-p^2), \text{ and } n = k^1 + p^2 + r^3 \, .
        \end{equation*}
    \end{enumerate}

    \hspace{-1em} c) According to item b)i), the flat-output components 
    can always be relabeled such that
    \begin{equation}\label{eq:rearrangement_case_rank1}
        r^1-k^1 = r^3-k^3 \geq r^2-k^2.
    \end{equation}
    Given any arrangement satisfying
    \eqref{eq:rearrangement_case_rank1}, the system is
    \mbox{$\varphi$-SFL} via minimal prolongations with
    \begin{equation}\label{eq:input_trf_case_rank1}
        (\hat u^1,\hat u^2,\hat u^3)=
        \bigl(\varphi^1_{[k^1]}(x,u),
        \phi_{\hat u^2}(x,u),
        \phi_{\hat u^3}(x,u)\bigr)
    \end{equation}
    if and only if one of the following conditions holds:
    \begin{enumerate}[label=\roman*)]
        \item $r^1-k^1 = d_{\mathrm{diff}}(\varphi)$,
        in which case $\hat u_1 = (\hat u^1)$ and
        \mbox{$D = (r^1-k^1)$} for any invertible
        transformation~\eqref{eq:input_trf_case_rank1}.    
        
        \item $r^1-k^1 < d_{\mathrm{diff}}(\varphi)$
        and there exists a transformation
        \mbox{$\hat u^2 = \phi_{\hat u^2}(x, u)$} such
        that, after
        applying~\eqref{eq:input_trf_case_rank1}, the
        derivatives $\varphi_{[0,R]}$ take the
        form~\eqref{eq:deriv_form_case_rank1} with
        $u^2, u^3$ replaced by $\hat u^2, \hat u^3$ and
        with $s = r^2-p^2$, i.e., $\hat u^3$ appears
        exclusively in $\varphi^2_{[r^2]}$ and/or $\varphi^3_{[r^3]}$. 
        Then the system is
        \mbox{$\varphi$-SFL} with
        $\hat u_1 = (\hat u^1, \hat u^2)$ and
        \mbox{$D = (r^1-k^1,\, r^2-p^2)$}.
    \end{enumerate}
\end{theorem}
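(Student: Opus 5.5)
The plan is to work directly with the exterior-derivative consequences~\eqref{eq:imp_exterior_deriv} of flatness and the linear independence of $\D\varphi_{[0,\beta]}$, mirroring the two-input argument of \cite{gstottner_linearization_2020}.

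\textbf{Step 1: the form~\eqref{eq:deriv_form_case_rank1}.} We work in the rank-one case, $\Rankf{\pad{u}\varphi_{[K]}}=1$; otherwise $\hat u^1$ could not be the only input entering at order $K$. Since $\hat u^1=\varphi^1_{[k^1]}$ replaces $u^1$, the remaining $\varphi^j_{[k^j]}$, $j\in\{2,3\}$, depend on $u$ only through $\hat u^1$. Differentiating along $f_u$ keeps $\varphi^j_{[k^j+\alpha]}$ a function of $(x,\hat u^1_{[0,\alpha]},u^2,u^3)$. Define $p^2,p^3$ as the first orders at which $(u^2,u^3)$ enters $\varphi^2,\varphi^3$. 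Relabel $u^2,u^3$ so that the first explicit dependence is on $u^2$. The row $\varphi^{2,3}_{[p]}$ then has rank one in $(u^2,u^3)$; otherwise $\D u$ would be exhausted too early, contradicting the counting below. This permits a change of inputs in which only $u^2$ appears. Let $s$ be the first shift at which $u^3$ appears. Because the chain rule shifts derivative orders uniformly, the arguments listed in~\eqref{eq:deriv_form_case_rank1} follow by induction. The first-$u^2$-occurrence definition must be made jointly for $j=2,3$, i.e.\ $p^2,p^3$ are chosen so that $u^2$ enters both rows at the same shift. I would justify this by noting that if $u^2$ entered $\varphi^2_{[p^2]}$ but $\varphi^3$ only later, one can treat $\varphi^3$'s delay as part of its $\hat u^1$-only segment and redefine $p^3$ accordingly.

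\textbf{Step 2: the index relations b).} The key tool is that $\D y_{[0,R]}$ are independent, so the map $y_{[0,R]}\mapsto$ the coordinates appearing in~\eqref{eq:deriv_form_case_rank1} must be injective. At the same time,~\eqref{eq:imp_exterior_deriv} forces $\D x$, $\D u$ to be recoverable from $\D y_{[0,R-1]}$ and $\D y_{[0,R]}$ respectively. I would count the highest derivatives of each input variable appearing up to order $R-1$ and up to order $R$.
\begin{itemize}[label={}]
\item For $\hat u^1$, the highest derivative is $\max_j(r^j-k^j)$. Since $u^1$ itself must be solvable from order-$R$ data while no new $y$-derivative is available beyond $R$, the top $\hat u^1$ derivative must be matched by at least two rows. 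This gives b)i): the two largest $r^i-k^i$ coincide, as in the two-input proof of~\eqref{eq:diff_equality}.
\item The same argument applied to $u^2$, which appears only in rows 2 and 3, gives b)ii).
\item For b)iii), equate the dimension $|R|+3$ of $y_{[0,R]}$ with $n$ plus the number of input derivatives occurring: $(r^1-k^1+1)+(r^2-p^2+1)+(r^2-p^2-s+1)$. Combined with $d_{\mathrm{diff}}(\varphi)=|R|-n$, this yields $n=k^1+p^2+r^3$ and $d_{\mathrm{diff}}(\varphi)=(r^1-k^1)+(r^2-p^2)$.
\end{itemize}
I expect the exact bookkeeping here to be the main obstacle. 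In particular, we must show that the $u^3$ derivatives do not inflate the count, i.e.\ that $u^3_{[0,r^2-p^2-s]}$ is exactly what $y_{[R]}$ can absorb. I would first attempt it via a rank argument on $\pad{u^3_{[\cdot]}}$ of the top rows, showing that each extra $u^3$ derivative consumes exactly one $y$-derivative.

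\textbf{Step 3: part c).} Case i) is Theorem~\ref{thm:suff_gen_sys_linearizable} with $\varphi_1=\varphi^1$, since $\Rankf{\pad u\varphi^1}=1$ after the transformation and $r^1-k^1=d_{\mathrm{diff}}$. For case ii), sufficiency is a dimension check. The extended state $(x,\hat u^1_{[0,r^1-k^1-1]},\hat u^2_{[0,r^2-p^2-1]})$ has dimension $|R|$ by b)iii), and it is parameterized by $y_{[0,R-1]}$ because $s=r^2-p^2$ keeps $\hat u^3$ out of all rows below $R$. Hence it is a diffeomorphism, exactly as in the proof of Theorem~\ref{thm:suff_gen_sys_linearizable}. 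For necessity, $|D|=d_{\mathrm{diff}}$ together with Remark~\ref{rem:minimal_prolongations_characterization} forces every prolonged input to be expressible in $y_{[0,R-1]}$. If only $\hat u^1$ is prolonged, this forces case i). Otherwise a second input $\hat u^2$ must be prolonged $r^2-p^2$ times, and the third input must not appear before order $R$, which is precisely $s=r^2-p^2$.
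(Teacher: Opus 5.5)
You have a genuine gap in Step~2. Equating $\idxsum{R}+3$ with $n+(r^1-k^1+1)+(r^2-p^2+1)+(r^2-p^2-s+1)$ presupposes that $(x,\hat u^1_{[0,r^1-k^1]},u^2_{[0,r^2-p^2]},u^3_{[0,r^2-p^2-s]})\mapsto\varphi_{[0,R]}$ is a local diffeomorphism. Flatness only gives linear independence of $\D\varphi_{[0,R]}$, which is an inequality. Your identity yields $\ddiff{\varphi}=(r^1-k^1)+(r^2-p^2)+(r^2-p^2-s)$ and $n=k^1+p^2+r^3-(r^2-p^2-s)$. These agree with b)iii) only when $s=r^2-p^2$, which is exactly the special situation of c)ii).

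A concrete failure: in the academic example of Section~\ref{sec:examples}, substitute $u^2=\tilde u^2+\tilde u^3$, $u^3=\tilde u^3$. Then $p^2=2$, $p^3=3$ and $s=0<r^2-p^2=1$. Your count gives $15\neq 14=\idxsum{R}+3$, so it would produce $\ddiff{\varphi}=5$ instead of $4$. The reason is that $\tilde u^2_{[1]}$ and $\tilde u^3_{[1]}$ enter $\varphi_{[0,R]}$ only through $u^2_{[1]}$, so your hoped-for ``one $y$-derivative per extra $u^3$-derivative'' is false. Your sufficiency argument for c)ii) cites b)iii) and inherits this gap.

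The missing idea, which the paper uses for both a) and b)iii), is to first prolong $\hat u^1$ $(r^1-k^1)$ times. On this extended system, with state $x_{ext}=(x,\hat u^1_{[0,r^1-k^1-1]})$, you take $\hat u^2:=\varphi^2_{[p^2]}$ as a static input. It is generally not a static input of the original system, since it contains $\hat u^1_{[1,p^2-k^2]}$. Minimality of $R$ together with $\Spanf{\D x_{ext}}\subset\Spanf{\D\varphi_{[0,R-1]}}$ then forces $\varphi^3_{[p^3,r^3-1]}$ to depend only on $x_{ext}$ and $\hat u^2_{[0,r^3-p^3-1]}$; otherwise $\D u^3$-directions appear that cannot be cancelled. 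The chain rule through $\hat u^2_{[\sigma]}=\varphi^2_{[p^2+\sigma]}$ then gives the common offset $s$ in rows $2$ and $3$. Your remark about ``redefining $p^3$'' does not establish this. The count is then made in $\D\varphi_{[0,R-1]}$, not $\D\varphi_{[0,R]}$: eliminating $\D\hat u^1_{[0,r^1-k^1-1]}$ and $\D\hat u^2_{[0,r^2-p^2-1]}$ leaves $k^1+p^2+r^3$ independent one-forms that lie in, and must span, $\Spanf{\D x}$.

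Step~1 has two further problems. First, restricting to rank one is unwarranted. The form \eqref{eq:deriv_form_case_rank1} allows $p^2=k^2$, which is the rank-two case after relabeling flat-output components, and Corollary~\ref{cor:rank2_case} relies on the theorem there. Second, your claim that the first $(u^2,u^3)$-row has rank one fails when $p^j=r^j$. For example, take $\dot x^1=u^1$, $\dot x^2=x^4+u^1$, $\dot x^3=x^5+u^1$, $\dot x^4=u^2$, $\dot x^5=u^3$ with $y=(x^1,x^2,x^3)$; then $(\varphi^2_{[2]},\varphi^3_{[2]})=(u^2+\hat u^1_{[1]},u^3+\hat u^1_{[1]})$, which has rank two in $(u^2,u^3)$. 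Your cancellation arguments for b)i) and b)ii), and the reduction of c)i) to Theorem~\ref{thm:suff_gen_sys_linearizable}, are fine.
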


The proof of Theorem~\ref{thm:rank1_case} is given in
Appendix~\ref{appendix}.
Theorem~\ref{thm:rank1_case} holds regardless of
whether $\Rankf{\pad{u}\varphi_{[K]}}=1$ or
$\Rankf{\pad{u}\varphi_{[K]}}=2$.
The following two
corollaries address each case separately. For rank one,
Corollary~\ref{cor:rearrangement} clarifies that the
input transformation~\eqref{eq:input_trf_case_rank1} is
only meaningful when the flat-output components are
arranged according
to~\eqref{eq:rearrangement_case_rank1}.


\begin{corollary}\label{cor:rearrangement}
    Consider a system~\eqref{eq:f_xu_3_inputs} with a flat output 
    $y=\varphi(x,u)$ satisfying $\Rankf{\pad{u}\varphi_{[K]}}=1$.
    If the flat output components are arranged such that 
    $r^1-k^1 < r^2-k^2 = r^3-k^3$, i.e., such 
    that~\eqref{eq:rearrangement_case_rank1} is not met, then there 
    exists no input transformation of the 
    form~\eqref{eq:input_trf_case_rank1} such that the given system 
    can be rendered \mbox{$\varphi$-SFL} via minimal prolongations 
    of the newly chosen inputs.
\end{corollary}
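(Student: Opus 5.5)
We argue by contradiction and rely on Theorem~\ref{thm:rank1_case}. Suppose the components are arranged with $r^1-k^1 < r^2-k^2 = r^3-k^3$ and $\Rankf{\pad{u}\varphi_{[K]}}=1$. Assume there is an input transformation of the form~\eqref{eq:input_trf_case_rank1} that makes the system \mbox{$\varphi$-SFL} via minimal prolongations. Here $\hat u^1 = \varphi^1_{[k^1]}(x,u)$. Because the rank is one, $\varphi^2_{[k^2]}$ and $\varphi^3_{[k^3]}$ depend on $u$ only through $\hat u^1$, which matches the structure~\eqref{eq:deriv_form_case_rank1}. Part a) of Theorem~\ref{thm:rank1_case} therefore applies with this labelling, and so does part b). In particular, $d_{\mathrm{diff}}(\varphi) = (r^1-k^1)+(r^2-p^2)$.

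The first step is to identify which variables can serve as prolonged inputs. By Remark~\ref{rem:minimal_prolongations_characterization}, minimality requires every prolonged input $\hat u_{1,[0,D-1]}$ to be parameterized by $y_{[0,R-1]}$ alone. The prolonged set must contain $\hat u^1$, since otherwise the chain $y^1_{[k^1,r^1]}$ could not be part of the extended state. Minimality also forces $\hat u^1$ to be prolonged exactly $r^1-k^1$ times, because $\hat u^1_{[j]} = y^1_{[k^1+j]}$ and this stays within $y_{[0,R-1]}$ only for $j \le r^1-k^1-1$.

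The second step compares dimensions. The extended state $x_{ext}=(x,\hat u_{1,[0,D-1]})$ must have dimension $\idxsum{R}$. With $\idxsum{D}=d_{\mathrm{diff}}(\varphi)$, the inputs other than $\hat u^1$ must supply $(r^2-p^2)$ prolongations in total. Since $r^2-k^2 > r^1-k^1$, we have $d_{\mathrm{diff}}(\varphi) > r^1-k^1$. Case c)i) of Theorem~\ref{thm:rank1_case} is therefore excluded, and some $\hat u^2$ must be prolonged.

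The main obstacle is ruling out case c)ii), which is where the hypothesis $r^2-k^2=r^3-k^3 > r^1-k^1$ must be used. The prolonged $\hat u^2$ first enters at $y^2_{[p^2]}$ and $y^3_{[p^3]}$. Its $(r^2-p^2)$-fold prolongation, together with $x$ and the $\hat u^1$-chain, would then have to parameterize $y^2_{[0,r^2-1]}$ and $y^3_{[0,r^3-1]}$ invertibly.

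We would use the fact that both $r^2-k^2$ and $r^3-k^3$ exceed $r^1-k^1$. The top derivatives $y^j_{[r^j]}$, $j\in\{2,3\}$, contain $\hat u^1_{[r^j-k^j]}$. This is of order strictly higher than $r^1-k^1$, the highest prolonged derivative of $\hat u^1$. Hence $\hat u^1_{[r^j-k^j]}$ is not a coordinate of the extended system. Equivalently, expressing it through the flat output requires $y^1_{[k^1 + r^j-k^j]}$ with $k^1+r^j-k^j > r^1$, which contradicts the definition of $R$.

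This contradiction only arises if the dependence on $\hat u^1_{[r^j-k^j]}$ is genuine. To show that it is, we would use the count from b)iii), $n=k^1+p^2+r^3$, together with the linear independence of $\D\varphi_{[0,\beta]}$. The argument shows that some $\varphi^j_{[k^j+\ell]}$ with $k^j+\ell \le r^j$ must depend explicitly on $\hat u^1_{[\ell]}$ for $\ell = r^j-k^j > r^1-k^1$. Otherwise the parameterization of $\hat u^1$ would not require $y^1$ up to order $r^1$, and the equality in b)i) would fail.

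Any input transformation of the form~\eqref{eq:input_trf_case_rank1} keeps $\hat u^1 = \varphi^1_{[k^1]}$ fixed. The obstruction therefore persists under every such transformation, and no choice of $(\phi_{\hat u^2},\phi_{\hat u^3})$ yields minimal prolongations.
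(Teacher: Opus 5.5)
Your skeleton differs from the paper's route. The paper argues that $\hat u^1$ would need at least $r^2-k^2$ prolongations, then adds $d^2=r^2-p^2$ to overshoot $\idxsum{R}$. You instead cap $d^1\le r^1-k^1$ via Remark~\ref{rem:minimal_prolongations_characterization}. You then note that $y^j_{[r^j]}$, $j\in\{2,3\}$, would involve $\hat u^1_{[r^j-k^j]}$, which is a derivative of an extended input. This is incompatible with the SFL extension satisfying $K_{ext}=R$, since SFL gives $K_{ext}=R_{ext}\ge R$ and $\idxsum{K_{ext}}=n+\idxsum{D}=\idxsum{R}$. That is the actual contradiction, not one with the definition of $R$.

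Both routes, however, hinge on a fact you do not establish: the dependence on $\hat u^1_{[r^j-k^j]}$ must remain genuine \emph{in the new coordinates}, i.e.\ after $u^2,u^3$ are rewritten through arbitrary $\phi_{\hat u^2},\phi_{\hat u^3}$. The paper simply reads this off \eqref{eq:deriv_form_case_rank1}. Your sketch via b)iii) and b)i) does not prove it; the remark that the parameterization of $\hat u^1$ needs $y^1$ up to order $r^1$ makes no sense, since $\hat u^1=y^1_{[k^1]}$. Your closing principle (``$\hat u^1$ is fixed, so the obstruction persists'') is also false in general. $\phi_{\hat u^2}$ may depend on $u^1$, and the $\hat u^1_{[i]}$-terms produced by rewriting $u^2_{[i]}$ can cancel the original ones. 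You invoke rank one only to land in \eqref{eq:deriv_form_case_rank1}, which Theorem~\ref{thm:rank1_case}a) also provides for rank two. So your argument would equally ``prove'' the rank-two analogue, which is false. Take $\dot x^1=u^1$, $\dot x^2=u^1+u^2$, $\dot x^3=x^4+u^1+u^2$, $\dot x^4=u^3$ with $y=(x^1,x^2,x^3)$. Then $K=(1,1,1)$, $R=(1,2,2)$, the rank is two, and with $\hat u^1=u^1$ one gets $\varphi^2_{[2]}=\hat u^1_{[1]}+u^2_{[1]}$. Yet $(\hat u^1,\hat u^2,\hat u^3)=(u^1,u^1+u^2,u^3)$ turns this into $\hat u^2_{[1]}$, and a one-fold prolongation of $\hat u^2$ yields an SFL extension with $\idxsum{D}=1=\ddiff{\varphi}$.

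The missing idea is an essential use of rank one. It gives $\varphi^j_{[k^j]}=\varphi^j_{[k^j]}(x,\hat u^1)$ with $\partial_{\hat u^1}\varphi^j_{[k^j]}\neq0$, by the relative degree. Hence, by the chain rule, $\varphi^j_{[k^j+i]}$ contains $\hat u^1_{[i]}$ linearly with coefficient exactly $\partial_{\hat u^1}\varphi^j_{[k^j]}$. It contains $u^2,u^3$ only through derivatives of order at most $i-1$, because $p^j\ge k^j+1$. Expressing $u^2,u^3$ as functions of $(x,\hat u^1,\hat u^2,\hat u^3)$ therefore creates only $\hat u^1$-derivatives of order at most $i-1$. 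For $i=r^j-k^j$, the coefficient of $\hat u^1_{[r^j-k^j]}$ thus stays nonzero for every transformation \eqref{eq:input_trf_case_rank1}.

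With this inserted, your argument closes. You can also drop two unneeded pieces:
\begin{itemize}
\item the appeal to c)i)/c)ii), which presupposes the opposite arrangement;
\item the claim that $\hat u^1$ must be prolonged, which fails if $r^1=k^1$; only $d^1\le r^1-k^1$ is used.
\end{itemize}
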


\begin{proof}[Proof of Corollary~\ref{cor:rearrangement}]
    Independently of the differences \mbox{$r^i-k^i$},
    after applying the input transformation
    $\hat u^1 = \varphi^1_{[k^1]}$, the time derivatives
    $\varphi_{[0,R]}$ take the
    form~\eqref{eq:deriv_form_case_rank1}. Rendering the
    system \mbox{$\varphi$-SFL} via minimal prolongations
    requires extending the state $x$ by time derivatives
    of suitably transformed inputs $\hat u^1, \hat u^2$
    such that the map
    \begin{equation}\label{eq:map_prf_cor}
        (x, \hat u^1_{[0,d^1-1]},
        \hat u^2_{[0,d^2-1]}) \mapsto \varphi_{[0,R-1]}
    \end{equation}
    becomes a diffeomorphism, i.e., such that the number
    of variables matches on both sides. The assumption
    \mbox{$r^1-k^1 < r^2-k^2 = r^3-k^3$} implies that
    the functions $\varphi^j_{[k^j,r^j]}$,
    $j\in\{2,3\}$, depend explicitly on
    $\hat u^1_{[0,r^2-k^2]}$. Thus, $\hat u^1$ must be
    prolonged by at least $r^2-k^2 > r^1-k^1$, which
    implies $r^1-k^1 < \ddiff{\varphi}$ and consequently
    $r^2-p^2 > 0$ by \mbox{$d_{\mathrm{diff}}(\varphi) 
    = (r^1-k^1)+(r^2-p^2)$}.
    Moreover, there must exist a suitable transformation
    $\hat u^2 = \phi_{\hat u^2}(x, \hat u^1, u^2, u^3)$
    as described in item~c)ii) of
    Theorem~\ref{thm:rank1_case}, implying
    $d^2 = r^2-p^2$. Hence,
    \[
        n + (r^2-k^2) + (r^2-p^2)
        > n + (r^1-k^1) + (r^2-p^2) = \idxsum{R}
    \]
    holds for the total number of variables and thereby
    contradicts the requirement
    that~\eqref{eq:map_prf_cor} is a diffeomorphism.
\end{proof}

For rank two, the following corollary shows that
\mbox{$\varphi$-SFL} via minimal prolongations is always
achievable, with the input transformation given directly
by flat-output derivatives.

\begin{corollary}\label{cor:rank2_case}
    Consider a system~\eqref{eq:f_xu_3_inputs}. Let $y=\varphi(x,u)$ 
    be a valid flat output and let its components be arranged 
    such that~\eqref{eq:rearrangement_case_rank1} holds.
    If $\Rankf{\pad{u}\varphi_{[K]}} = 2$, 
    then the following properties are satisfied:
    
    a) The system and $\varphi$ satisfy
    the conditions of Theorem~\ref{thm:suff_gen_sys_linearizable} with
    $\varphi_1=(\varphi^1, \varphi^2)$,
    $\varphi_2=\varphi^3$,
    $\Rankf{\pad{u}\varphi_{1,[K_1]}(x,u)}=2$ and 
    $(r^1-k^1) + (r^2-k^2) = \ddiff{\varphi}$.

    b) After applying any invertible input transformation of the form 
    \mbox{$(\hat u^1, \hat u^2, \hat u^3)=(\varphi^1_{[k^1]}(x,u),
    \varphi^2_{[k^2]}(x,u),
    \phi_{\hat u^3}(x,u))$} 
    the flat-output derivatives take the form 
    \eqref{eq:derivative_structure_m_inputs_suff} with 
    $\hat u_1 = (\hat u^1, \hat u^2)$, $\hat u_2 = \hat u^3$, and
    $D=(r^1-k^1, r^2 - k^2)$.
    
    c) A prolongation of $\hat u_1=(\hat u^1, \hat u^2)$ 
    of order \mbox{$D=(r^1-k^1, r^2 - k^2)$} yields 
    a system~\eqref{eq:f_xu_extended} that is $\varphi$-SFL.
    
\end{corollary}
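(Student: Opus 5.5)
The plan is to reduce Corollary~\ref{cor:rank2_case} to Theorem~\ref{thm:rank1_case}. Once part a) is established, parts b) and c) follow at once from Theorem~\ref{thm:suff_gen_sys_linearizable} and Corollary~\ref{cor:deriv_structure}.

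First, I would use the rank-two assumption $\Rankf{\pad{u}\varphi_{[K]}}=2$ to determine where $u^2$ first appears in the structure~\eqref{eq:deriv_form_case_rank1}. After $\hat u^1=\varphi^1_{[k^1]}$ replaces $u^1$, rank two means that one of $\varphi^2_{[k^2]}$, $\varphi^3_{[k^3]}$ depends explicitly on an input other than $\hat u^1$. Hence one of $p^2=k^2$, $p^3=k^3$ holds.

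The delicate point is that the arrangement~\eqref{eq:rearrangement_case_rank1} singles out index $3$, whereas the component with $p^j=k^j$ might a priori be index $3$ rather than $2$. I would handle this with b)ii), $r^2-p^2=r^3-p^3$, together with $p^j\ge k^j$. If $p^3=k^3$, then
\[
r^3-k^3=r^2-p^2\le r^2-k^2\le r^3-k^3 ,
\]
so $p^2=k^2$ as well. Either way we obtain $p^2=k^2$ under the arrangement, after swapping components $2$ and $3$ in the case of equality, which keeps~\eqref{eq:rearrangement_case_rank1} valid. This case analysis is the step I expect to be the main obstacle. In particular, one must check that the rank of $\pad{u}(\varphi^1_{[k^1]},\varphi^2_{[k^2]})$ is indeed $2$ for the chosen labeling, and not only the rank of the full triple. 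If $\varphi^2_{[k^2]}$ depended only on $(x,\hat u^1)$, then $p^2>k^2$, and the argument above forces $p^3>k^3$ as well, contradicting rank two.

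With $p^2=k^2$, b)iii) gives $\ddiff{\varphi}=(r^1-k^1)+(r^2-k^2)$, and $\Rankf{\pad{u}(\varphi^1_{[k^1]},\varphi^2_{[k^2]})}=2$. For $R_1>K_1$, we need $r^1-k^1>0$ and $r^2-k^2>0$. The first holds because $\ddiff{\varphi}>0$, since $\idxsum{K}<n$, and $r^1-k^1$ is among the largest differences by~\eqref{eq:rearrangement_case_rank1}. For the second, I would argue that $r^2=k^2$ would mean $\varphi^2_{[k^2]}$ is already the top derivative in~\eqref{eq:deriv_form_case_rank1}. Using the dimension count $n=k^1+p^2+r^3$, this forces $\ddiff{\varphi}=r^1-k^1$. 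That case is still covered by Theorem~\ref{thm:suff_gen_sys_linearizable} with $\varphi_1=\varphi^1$, but the statement asks for $(\varphi^1,\varphi^2)$. So I would need to verify that this degenerate case is excluded, or else accept $d^2=0$ as a trivial prolongation. This is a secondary obstacle to check against the definition requiring $d^j>0$.

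Then a) holds. Theorem~\ref{thm:suff_gen_sys_linearizable} yields c) with $D=(r^1-k^1,r^2-k^2)$, and Corollary~\ref{cor:deriv_structure} yields the form~\eqref{eq:derivative_structure_m_inputs_suff} in b). The transformation $(\varphi^1_{[k^1]},\varphi^2_{[k^2]},\phi_{\hat u^3})$ is invertible precisely because of the rank-two property.
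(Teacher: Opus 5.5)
Your argument follows the paper's proof. Both proofs use item b)ii) of Theorem~\ref{thm:rank1_case} together with the rank-two hypothesis to force $p^2=k^2$, read off $\ddiff{\varphi}=(r^1-k^1)+(r^2-k^2)$ from b)iii), and then invoke Theorem~\ref{thm:suff_gen_sys_linearizable} and Corollary~\ref{cor:deriv_structure}. You split according to which of $p^2=k^2$, $p^3=k^3$ the rank condition delivers, and close with the chain $r^3-k^3=r^2-p^2\le r^2-k^2\le r^3-k^3$. This is a more explicit version of the paper's split into $r^2-k^2=r^3-k^3$ versus $r^2-k^2<r^3-k^3$, and it is correct.

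On the point you left open: the degenerate case $r^2=k^2$ is not excluded. System (I) of Example~\ref{ex:three_input_comparison} has $\Rankf{\pad{u}\varphi_{[K]}}=2$, $\idxsum{K}=3<4=n$, and $R-K=(1,0,1)$. The arrangement~\eqref{eq:rearrangement_case_rank1} therefore forces $\varphi^2=x^2$ with $r^2=k^2$. In that case item a) fails as literally stated, because Theorem~\ref{thm:suff_gen_sys_linearizable} requires $R_1>K_1$ and Definition~\ref{def:SFL} requires $d^j>0$. The paper's proof passes over this case silently. Your second option is the right repair. If $r^2=k^2$, then $\ddiff{\varphi}=r^1-k^1$, and item c)i) of Theorem~\ref{thm:rank1_case} yields $\varphi$-SFL by prolonging $\hat u^1$ alone. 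Equivalently, apply Theorem~\ref{thm:suff_gen_sys_linearizable} with $\varphi_1=\varphi^1$. This is exactly what $D=(r^1-k^1,0)$ means once zero-order prolongations are allowed, and items b) and c) hold under that reading. For $r^2>k^2$ your proof is complete as written.
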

\begin{proof}[Proof of Corollary~\ref{cor:rank2_case}]
    Consider an $(x,u)$-flat three-input
    system~\eqref{eq:f_xu_3_inputs} with 
    $\Rankf{\pad{u}\varphi_{[K]}} = 2$ and 
    where~\eqref{eq:rearrangement_case_rank1} holds and 
    the input transformation 
    $\hat u^1 = \varphi^1_{[k^1]}$ has
    been applied. Using Theorem~\ref{thm:rank1_case} 
    item \emph{b)ii)},
    we distinguish two cases.

    \emph{Case $r^2-k^2=r^3-k^3$:} 
    Since $p^2=k^2$ and $p^3=k^3$, any two flat-output
    components can be chosen as $\varphi_1$, and
    $\Rankf{\pad{u}\varphi_{[K]}} = 2$ directly yields
    $\Rankf{\pad{u}\varphi_{1,[K_1]}} = 2$.

    \emph{Case $r^2-k^2 < r^3-k^3$:} 
    With $p^2-k^2 < p^3-k^3$, it follows that
    $p^2=k^2$ and thus
    $\Rankf{\pad{u}(\varphi^1_{[k^1]},
    \varphi^2_{[k^2]})} = 2$.

    In both cases, the corollary follows from
    Theorem~\ref{thm:suff_gen_sys_linearizable} and 
    item~\textit{b)} of Theorem~\ref{thm:rank1_case}.
\end{proof}

\subsection{Verification of a Flat-Output Candidate}
\label{sec:identify_flat_output}

This section demonstrates how the structural results of
Theorem~\ref{thm:rank1_case} can be used to verify
flat-output candidates. To this end, we first establish
how the multi-index $R$ can be determined from $K$,
$p^2$, and $p^3$ alone.

\begin{lemma}\label{lem:multi-index_R}
    Consider a system~\eqref{eq:f_xu_3_inputs} with a
    flat output $\varphi(x,u)$ whose derivatives
    $\varphi_{[0,R]}$ take the
    form~\eqref{eq:deriv_form_case_rank1}. If the
    flat-output components are arranged such
    that~\eqref{eq:rearrangement_case_rank1} holds, then
    the multi-index $R$ is given by
    \begin{equation}\label{eq:R_from_KP}
        R = (n\!-\!k^3\!-\!p^2,\;
        n\!-\!k^1\!-\!p^3,\;
        n\!-\!k^1\!-\!p^2).
    \end{equation}
\end{lemma}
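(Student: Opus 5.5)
Lemma~\ref{lem:multi-index_R} follows by a purely algebraic combination of the relations in item~b) of Theorem~\ref{thm:rank1_case} with the arrangement~\eqref{eq:rearrangement_case_rank1}. From b)iii) we have $n = k^1 + p^2 + r^3$. Solving for $r^3$ immediately gives $r^3 = n - k^1 - p^2$, the third component of~\eqref{eq:R_from_KP}. No further structural information is needed for this component.

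Next I use b)ii), $r^2 - p^2 = r^3 - p^3$, to obtain
\[ r^2 = r^3 - p^3 + p^2 = n - k^1 - p^3 . \]
This is the second component. Finally, the arrangement~\eqref{eq:rearrangement_case_rank1} gives $r^1 - k^1 = r^3 - k^3$. Hence
\[ r^1 = r^3 - k^3 + k^1 = n - k^3 - p^2 , \]
which is the first component.

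The only point requiring care is that the relations of b)ii) and b)iii) are stated with a specific labeling. There, $p^2, p^3$ refer to the orders at which $u^2$ first appears in components $2$ and $3$, after a relabeling of $u^2, u^3$. Meanwhile, \eqref{eq:rearrangement_case_rank1} is a relabeling of the flat-output components. I therefore expect the main (mild) obstacle to be confirming that both labelings are simultaneously compatible, i.e., that the formula $n = k^1 + p^2 + r^3$ refers to the same components $2$ and $3$ as the arrangement. This holds because component $1$ is fixed by $\hat u^1 = \varphi^1_{[k^1]}$, and b)ii) is symmetric in the indices $2$ and $3$.

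The one asymmetric formula, b)iii), must then be read in the arrangement where $r^3 - k^3 \ge r^2 - k^2$. If instead it were derived for the opposite order, I would rederive it from the counting $d_{\mathrm{diff}}(\varphi) = |R| - n$. Combined with $d_{\mathrm{diff}}(\varphi) = (r^1-k^1)+(r^2-p^2)$ and the two equalities above, this yields $n = r^2 + r^3 + k^1 - r^2 + p^2 - \dots$, i.e. $n = k^1 + p^2 + r^3$. This cross-check also confirms that the three formulas are mutually consistent.
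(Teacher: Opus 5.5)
Your proof is correct and follows the paper's argument: $r^3$ from item b)iii), $r^2$ from item b)ii), and $r^1$ from the arrangement $r^1-k^1=r^3-k^3$. The paper also cites $d_{\mathrm{diff}}(\varphi)=\idxsum{R}-n$ for $r^1$, which your direct substitution shows is not needed.

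Your labeling concern is moot, since item b)ii) gives $p^2+r^3=p^3+r^2$, so item b)iii) is already symmetric in components $2$ and $3$. That is just as well, because your fallback rederivation would be circular: it relies on the $d_{\mathrm{diff}}$ half of item b)iii) itself.
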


\begin{proof}
    Combining items~\textit{b)ii)} and~\textit{b)iii)} of
    Theorem~\ref{thm:rank1_case} yields
    \begin{equation}\label{eq:n_eq_kpr}
        n = k^1 + p^2 + r^3 = k^1 + p^3 + r^2 \, ,
    \end{equation}
    from which $r^2 = n - k^1 - p^3$ and
    $r^3 = n - k^1 - p^2$ follow directly. For $r^1$,
    using $\ddiff{\varphi} = \idxsum{R} - n$ together
    with item~\textit{b)iii)} gives $r^1 = n - k^3 - p^2$,
    where the arrangement~\eqref{eq:rearrangement_case_rank1},
    i.e., $r^1-k^1 = r^3-k^3$, was used.
\end{proof}

For a nonlinear
system~\eqref{eq:f_xu_m_inputs}, verifying whether a
given $m$-tuple $\varphi(x, u_{[0,\nu]})$ is a flat
output is not straightforward, since the multi-index $R$
is \textit{a priori} unknown. For $(x,u)$-flat
three-input systems of the
form~\eqref{eq:f_xu_3_inputs}, however, the derivative
structure~\eqref{eq:deriv_form_case_rank1} together with
Lemma~\ref{lem:multi-index_R} yields a systematic
verification procedure. Given a candidate triple
$\varphi(x,u)$, one proceeds as follows:%
\footnote{Steps~\ref{alg:step2} and~\ref{alg:step3}
may require a relabeling of the inputs $(u^1,u^2,u^3)$.}
\begin{enumerate}[label=\arabic*)]
    \item \label{alg:step1}
    Compute the relative degrees
    $K = (k^1, k^2, k^3)$.

    \item \label{alg:step2}
    Apply the input transformation
    $\hat u^1 = \varphi^1_{[k^1]}(x,u)$ to replace
    $u^1$.

    \item \label{alg:step3}
    Differentiate $\varphi^2$ and $\varphi^3$ with
    respect to time until $u^2$ explicitly appears, thereby determining $p^2$ and $p^3$.

    \item \label{alg:step4}
    If $p^2-k^2 \neq p^3-k^3$, relabel the flat-output
    components such that $p^2-k^2 < p^3-k^3$ and
    proceed with step~\ref{alg:step7}.

    \item \label{alg:step5}
    If $p^2-k^2 = p^3-k^3$, it follows that
    \mbox{$r^2-k^2 = r^3-k^3$}. Hence, either all differences
    $r^i-k^i$ are equal, or
    \mbox{$r^1-k^1 < r^2-k^2 = r^3-k^3$}. In both cases,
    permuting $\varphi^1$ and $\varphi^2$
    ensures~\eqref{eq:rearrangement_case_rank1}.

    \item \label{alg:step6}
    For the permuted components, apply
    steps~\ref{alg:step2} and~\ref{alg:step3} with
    the original inputs $u$. Proceed
    with step~\ref{alg:step7}.

    \item \label{alg:step7}
    By Lemma~\ref{lem:multi-index_R}, the multi-index
    $R$ is given by~\eqref{eq:R_from_KP}.

    \item \label{alg:step8}
    Compute $\varphi_{[0,R-1]}$ and verify~\eqref{eq:imp_exterior_deriv_x}, 
    i.e., that the state can be reconstructed from $\varphi$ and its
    derivatives.
\end{enumerate}

\section{Examples}
\label{sec:examples}
This section illustrates Theorem~\ref{thm:rank1_case}
via an academic example followed by a physically inspired
example demonstrating Corollary~\ref{cor:rank2_case}.

\begin{example}[An academic example]
    Consider the following system 
    from~\cite[Sec. 4.4.2]{kolar_contributions_2017}:
    \stepcounter{equation}
    \begin{equation}\label{eq:exa_n_7_m_3_system}
        \begin{alignedat}{2}
            \dot x^1 & = u^1, & \hspace{1.5em} \dot x^5 & = -x^6 + x^4x^7u^1, \\
            \dot x^2 & = x^3+x^4u^1, & \dot x^6 & = -x^5u^1 + x^7(u^1u^3 -u^2-1) \\
            \dot x^3 & = u^2 - u^1u^3, & & \hspace{1em} + (x^4+u^1)x^4u^1, \\
            \dot x^4 & = u^3, & \dot x^7 & = x^4+u^1. \\
        \end{alignedat}
    \end{equation}
    System~\eqref{eq:exa_n_7_m_3_system} admits the flat output 
    $y=\varphi(x)=(x^2,x^1,x^5)$ with relative degrees $K=(1,1,1)$ and
    multi-index \mbox{$R=(4,3,4)$}, yielding \mbox{$d_{\mathrm{diff}}(\varphi)=4$}. 
    
    \emph{Item a) of Theorem~\ref{thm:rank1_case}:} 
    After applying the input transformation 
    $\hat u^1=\varphi^1_{[1]}(x,u)= x^3 + u^1x^4$, the time 
    derivatives $y_{[0,R]}=\varphi_{[0,R]}$ given by $y^1=x^2$,
    $y^1_{[1,4]}=\hat u^1_{[0,3]}$, and
    \begin{equation}\label{eq:deriv_struct_academic}
        \begin{alignedat}{2}
            y^2 &\hspace{-0.2em}=\hspace{-0.2em} x^1, & \hspace{1em}
            y^3 &\hspace{-0.2em}=\hspace{-0.2em} x^5, \\
            y^2_{[1]} &\hspace{-0.2em}=\hspace{-0.2em} 
            (\hat u^1\!-\!x^3)/x^4, & 
            y^3_{[1]} &\hspace{-0.2em}=\hspace{-0.2em} 
            (\hat u^1\!-\!x^3)x^7\!-\!x^6, \\
            y^2_{[2]} &\hspace{-0.2em}=\hspace{-0.2em} 
            (\hat u^1_{[1]}\!-\!u^2)/x^4, & 
            y^3_{[2]} &\hspace{-0.2em}=\hspace{-0.2em} 
            \varphi^3_{[2]}(x,\!\hat u^1_{[0,1]}), \\
            y^2_{[3]} &\hspace{-0.2em}=\hspace{-0.2em} 
            \varphi^2_{[3]}(x,\!\hat u^1_{[1,2]},\!u^2_{[0,1]},\!u^3), & 
            y^3_{[3]} &\hspace{-0.2em}=\hspace{-0.2em} 
            \varphi^3_{[3]}(x,\!\hat u^1_{[0,2]},\!u^2), \\
            & & 
            y^3_{[4]} &\hspace{-0.2em}=\hspace{-0.2em} 
            \varphi^3_{[4]}(x,\!\hat u^1_{[0,3]},\!u^2_{[0,1]},\!u^3)\, , 
        \end{alignedat}
    \end{equation}
    take the form~\eqref{eq:deriv_form_case_rank1} with 
    $p^2=2$, $p^3=3$, and $s=1$.
    
    \emph{Item b) of Theorem~\ref{thm:rank1_case}:}
    With $R - K = (3,2,3)$ and \mbox{$r^2-p^2 = r^3-p^3 = 1$},
    items~\textit{i)} and~\textit{ii)} are readily verified.
    Item~\textit{iii)} follows from
    \mbox{$\ddiff{\varphi}=(r^1-k^1)+(r^2-p^2)=4$}
    and \mbox{$n=k^1+p^2+r^3=7$}.
    
    \emph{Item c) of Theorem~\ref{thm:rank1_case}:} 
    The flat-output components are arranged such 
    that~\eqref{eq:rearrangement_case_rank1} holds.
    Since \mbox{$r^1-k^1=3<\ddiff{\varphi}$}, item \textit{c)ii)} applies.
    With~\eqref{eq:deriv_struct_academic} already meeting the conditions 
    of \textit{c)ii)}, it remains to apply the input 
    transformation \mbox{$(\hat u^1,\hat u^2,\hat u^3)
    =\bigl(\varphi^1_{[1]}(x,u),\,u^2,\,u^3\bigr)$}, with a subsequent
    three-fold prolongation of $\hat u^1$ and a one-fold prolongation 
    of $\hat u^2$. Then, the extended system
    \begin{equation*}
        \dot{x}=f(x,u), \quad 
        \dot{\hat u}^1_{[0,d^1-1]} = \hat u^1_{[1,d^1]}, \quad 
        \dot{\hat u}^2_{[0,d^2-1]} = \hat u^2_{[1,d^2]}
    \end{equation*}
    with $D=(r^1-k^1, r^2-p^2)=(3,1)$ satisfying 
    \mbox{$\idxsum{D}=d_{\mathrm{diff}}(\varphi)=4$} 
    is \mbox{$\varphi$-SFL}. Thus,~\eqref{eq:exa_n_7_m_3_system} is 
    \mbox{$\varphi$-SFL} via minimal prolongations.
\end{example}
\vspace{-1ex}

\begin{example}[Aerial manipulator]
    A model of the aerial manipulator as studied 
    in~\cite{HartlExactLinearizationMinimally2024a} is given by 
    the equations
    \begin{equation}\label{eq:f_plan_man}
        \dot q^i = v^i, \quad 
        \dot v^i = f^i(q,v,u), \quad i=1,\ldots,4,
    \end{equation}
    with the configuration variables $q=(x_e, z_e, \theta, \phi)$,
    the corresponding velocities 
    $v=(v_{x_e}, v_{z_e},\omega_{\theta}, \omega_{\phi})$ and the 
    control inputs $u=(u^1,u^2,u^3)$.
    As shown in \cite{welde_role_2023}, a flat output 
    of~\eqref{eq:f_plan_man} is given by 
    \mbox{$y = (\theta, \; z_e - r_e\sin(\phi+\theta), \; 
    x_e - r_e\cos(\phi+\theta))$}, where $r_e > 0$ is a constant parameter. 
    With $K=(2,2,2)$ and $R=(4,4,4)$, \eqref{eq:rearrangement_case_rank1} 
    is fulfilled.     
    Let us verify each part of Corollary~\ref{cor:rank2_case}:
    
    \emph{Item a):} With
    $\Rankf{\pad{u}(\varphi^1_{[k^1]},\varphi^2_{[k^2]})}=2$ and 
    \mbox{$(r^1-k^1) + (r^2-k^2) = 4 = \ddiff{\varphi}$},
    the conditions of Theorem~\ref{thm:suff_gen_sys_linearizable} are
    met with $\varphi_1=(\varphi^1,\varphi^2)$, $\varphi_2=\varphi^3$.
    
    \emph{Item b):} 
    Applying the regular input transformation 
    \begin{equation*}
        (\hat u^1, \hspace{-0.1em}\hat u^2, \hspace{-0.1em}\hat u^3) 
        = ( \varphi^1_{[2]}(\theta, \phi, \omega_\theta, \omega_\phi, u), 
            \varphi^2_{[2]}(\theta, \phi, \omega_\theta, \omega_\phi, u), u^3 )
    \end{equation*}
    the time derivatives of the components 
    $\varphi^1,\varphi^2$ take the form
    \begin{subequations}\label{eq:aerial_manip_deriv}
    \begin{equation}
        \begin{aligned}
            y^1 & \hspace{-0.2em}=\hspace{-0.2em} \theta, & 
            y^1_{[1]} & \hspace{-0.2em}=\hspace{-0.2em} \omega_\theta, & 
            y^1_{[2,4]} & \hspace{-0.2em}=\hspace{-0.2em} \hat u^1_{[0,2]}, \\[0.5ex]
            y^2 & \hspace{-0.2em}=\hspace{-0.2em} \varphi^2(z_e, \theta, \phi), \hspace{-0.6em} &  
            y^2_{[1]} & \hspace{-0.2em}=\hspace{-0.2em} \varphi^2_{[1]}(v_{z_e}, \theta, \phi, \omega_\theta, \omega_\phi), \hspace{-0.6em} & 
            y^2_{[2,4]} & \hspace{-0.2em}=\hspace{-0.2em} \hat u^2_{[0,2]}. \\[0.5ex]
        \end{aligned}
    \end{equation}
    and the derivatives of $\varphi^3$ are given by
    \begin{equation}
    \begin{aligned}
            y^3 & = \varphi^3(x_e, \theta, \phi), &  
            y^3_{[1]} & = \varphi^3_{[1]}(v_{x_e}, \xi), \\
            y^3_{[2]} & = \varphi^3_{[2]}(\xi, \hat u^1, \hat u^2), & 
            y^3_{[3]} & = \varphi^3_{[3]}(\xi, \hat u^1_{[0,1]}, \hat u^2_{[0,1]}), \\
            y^3_{[4]} & = \varphi^3_{[4]}(\xi, \hat u^1_{[0,2]}, \hat u^2_{[0,2]}, \hat u^3).
    \end{aligned}
    \end{equation}
    \end{subequations}
    with $\xi=(\theta, \phi, \omega_\theta, \omega_\phi)$. 
    The derivatives~\eqref{eq:aerial_manip_deriv} are of the 
    form~\eqref{eq:derivative_structure_m_inputs_suff} with 
    $\hat u_1 = (\hat u^1, \hat u^2)$, $\hat u_2 = \hat u^3$
    and $D=(2,2)$.
    
    \emph{Item c):} Since the mapping defined 
    by~\eqref{eq:aerial_manip_deriv} is a diffeomorphism, extending the 
    original state by the prolongations $\hat u^1_{[0,1]}$ and 
    $\hat u^2_{[0,1]}$ renders the system \mbox{$\varphi$-SFL}. Thus, 
    the aerial manipulator~\eqref{eq:f_plan_man} is \mbox{$\varphi$-SFL} 
    via minimal prolongations with 
    $D=(r^1-k^1, r^2-k^2)=(2,2)$.
\end{example}

\section{Conclusion}
\label{sec:conclusion}
Our work has introduced sufficient conditions for flat
multi-input systems to be \mbox{$\varphi$-SFL} via minimal
prolongations. Furthermore, for $(x,u)$-flat three-input
systems, \mbox{$\varphi$-SFL} via minimal prolongations has
been characterized in terms of $K$, $R$ and the
structure of the time derivatives of the flat output
components after applying suitable static input
transformations. Distinguishing cases according to
$\Rankf{\pad{u}\varphi_{[K]}}$, systems with rank two
can always be rendered $\varphi$-SFL via a minimal
number of prolongations (Corollary~\ref{cor:rank2_case}c)), 
while systems with rank
one require an additional structural condition
(Theorem~\ref{thm:rank1_case}c)). These findings extend
the two-input results to the three-input case and establish
explicit relationships between the state dimension, the
differential difference, $K$ and $R$. Given that
prolongations of suitably chosen inputs systematically
reduce the distance to static feedback linearizability,
future work toward solving the flatness problem for the
broader class of three-input systems could address the
following open problems: characterizing when item~\textit{c)ii)} of
Theorem~\ref{thm:rank1_case} can be satisfied, identifying
system classes that are guaranteed \mbox{$\varphi$-SFL} via
minimal prolongations, and developing constructive
procedures for computing suitable input transformations
that do not require a given flat output. Thereby,
algorithmic flatness tests become more feasible.

\bibliography{IEEEabrv,mybibfile}

\appendices

\section{ }
\label{appendix}

\begin{proof}[Proof of Theorem~\ref{thm:rank1_case}]
The proof is organized according to the items of the
Theorem.

\emph{Item a):} Since 
$1\leq\Rankf{\pad{u}\varphi_{[K]}}\leq2$, there always
exists a relabeling of the input and flat output
components, such that applying
$\hat u^1 = \varphi^1_{[k^1]}(x,u)$ replaces $u^1$.
Then the derivatives of $\varphi^1$ take the form
\mbox{$y^1_{[0,k^1-1]} = \varphi^1_{[0,k^1-1]}(x)$} and
\mbox{$y^1_{[k^1,r^1]} = \hat u^1_{[0,r^1-k^1]}$}.
Let $p^2 \geq k^2$ denote the smallest derivative order at
which $\varphi^2_{[p^2]}$ explicitly depends on $u^2$.
For the rank-two case, we can always relabel the 
flat-output components, such that $p^2=k^2$.
The integer $s \geq 0$ is the smallest
integer such that $\varphi^2_{[p^2+s]}$ and/or $\varphi^3_{[p^3+s]}$
explicitly depend on $u^3$.
Let us now consider the extended system
\begin{equation}\label{eq:ext_sys_proof}
    \dot x = f(x, \hat u^1, u^2, u^3), \hspace{1em}
    \dot{\hat u}^1_{[0,r^1-k^1-1]}
    = \hat u^1_{[1,r^1-k^1]}
\end{equation}
with the state
$x_{ext}=(x,\hat u^1_{[0,r^1-k^1-1]})$ and the input
\mbox{$u_{ext}=(\hat u^1_{[r^1-k^1]},u^2,u^3)$}. Note
that the extended system~\eqref{eq:ext_sys_proof} is still
flat with the original (minimal) multi-index $R$. Next, we
apply an input transformation
$\hat u^2 = \varphi^2_{[p^2]}$, that is static
for~\eqref{eq:ext_sys_proof}, and thereby replace $u^2$.
Consequently, the derivatives
$\varphi^1_{[0,r^1]}$ are unchanged and the remaining
time derivatives take the form
\begin{equation}\label{eq:flat_output_deriv_proof}
    \begin{alignedat}{2}
        y^2_{[0,p^2-1]} &= \varphi^2_{[0,p^2-1]}(x_{ext}), & 
        y^3_{[0,p^3-1]} &= \varphi^3_{[0,p^3-1]}(x_{ext}), \\
        y^{2}_{[p^2]} & =\hat u^2, & 
        y^{3}_{[p^3]} & = \varphi^{3}_{[p^3]}( x_{ext}, \hat u^2 ), \\[-1ex]
        & \hspace{0.5em} \vdots && \hspace{0.5em} \vdots \\[-1ex] 
        y^{2}_{[r^2-1]} & = \hat u^2_{[r^2-p^2-1]}, & \hspace{1em} 
        y^{3}_{[r^3-1]} & = \varphi^{3}_{[r^3-1]}( x_{ext}, \\
        &&& \hspace{1.5em} \hat u^2_{[0,r^3-p^3-1]}), \\
        y^{2}_{[r^2]} & = \hat u^2_{[r^2-p^2]}, & 
        y^{3}_{[r^3]} & = \varphi^{3}_{[r^3]}( x_{ext}, \hat u^1_{[r^1-k^1]}, \\
        &&& \hspace{1.5em} \hat u^2_{[0,r^3-p^3]}, u^3 ), \\
    \end{alignedat}
\end{equation}
where $p^3$ is the smallest derivative order such that 
$\varphi^3_{[p^3]}$ explicitly depends on $\hat u^2$.
The functions $\varphi^3_{[p^3,r^3-1]}$ depend solely on
$x_{ext}$ and $\hat u^2_{[0,r^3-p^3-1]}$, since $R$ is
by assumption the minimal multi-index for
which~\eqref{eq:flat_parametrization} holds.
Conversely, if the
first occurrence of $u^3$ would be in $\varphi^3_{[q]}$
for some \mbox{$p^3 \leq q \leq r^3-1$}, then the
differentials $\D\varphi^3_{[q,r^3-1]}$ contain nonzero
components in the directions $\D u^3_{[0,r^3-q-1]}$ that
do not appear in \(\D\varphi^1_{[0,r^1-1]}\) or
\(\D\varphi^2_{[0,r^2-1]}\). Hence, these components
cannot be canceled by linear combinations within
\(\Spanf{\D\varphi_{[R-1]}}\) contradicting
\mbox{$\Spanf{\D x_{ext}}
\subset \Spanf{\D \varphi_{[0,R-1]}}$}.
For the case that $\varphi^2_{[p^2+s]}$ explicitly depends on $u^3$, 
plugging \mbox{$\hat u^2 = \varphi^2_{[p^2+s]}(x_{ext},
u^2_{[0,s]}, u^3)$} into
\mbox{$\varphi^3_{[p^3+s]}(x_{ext},
\hat u^2_{[0,s]})$} implies that
$\varphi^3_{[p^3+s]}$ also explicitly depends on $u^3$.
Conversely, if none of the functions $\varphi^2_{[k^2,r^2]}$
explicitly depends on $u^3$, then, by analogous reasoning 
as above, it follows that $u^3$ must exclusively appear in $\varphi^3_{[r^3]}$.

Finally, by the chain rule we
obtain~\eqref{eq:deriv_form_case_rank1}.

For item \textit{b)ii)} and item \textit{b)iii)}, 
we consider the exterior
derivative of $\varphi^1_{[0,r^1]}=(\varphi^1_{[0,k^1-1]}
(x), \hat u^1_{[0,r^1-k^1]})$
and~\eqref{eq:flat_output_deriv_proof}:

\emph{Item b)ii):} It follows that the components in
directions $\D \hat u^2_{[1,r^j-p^j]}$, $j\in\{2,3\}$,
can only cancel within $\Spanf{\D\varphi_{[R]}}$ if
$r^2-p^2 = r^3-p^3$.
Otherwise,~\eqref{eq:imp_exterior_deriv} would be
contradicted.

\textit{Item b)iii):} By eliminating all terms involving
\mbox{$\D \hat u^1_{[0,r^1-k^1-1]}$} and
\mbox{$\D \hat u^2_{[0,r^2-p^2-1]}$} in
\mbox{$\D \varphi_{[0,R-1]}$} one obtains a total of
\mbox{$k^1 + p^2 + r^3$} one-forms contained in
$\Spanf{\D x}$. Hence,
\mbox{$n < k^1 + p^2 + r^3$} would imply linear
dependence, contradicting flatness. Conversely,
\mbox{$n > k^1 + p^2 + r^3$} would imply
$\Spanf{\D x} \not\subset
\Spanf{\D \varphi_{[0,R-1]}}$. Consequently, the state
dimension is given by
\mbox{$n = k^1 + p^2 + r^3$}. From
\mbox{$\ddiff{\varphi}=\idxsum{R}-n$},~%
$d_{\mathrm{diff}}(\varphi) = (r^1-k^1)+(r^2-p^2)$ follows directly.

\emph{Item b)i):} Let us take the exterior derivative
of~\eqref{eq:deriv_form_case_rank1}. First, assume that
$r^1-k^1 < r^2-k^2$. If additionally
\mbox{$r^2-k^2 \neq r^3-k^3$}, say
$r^2-k^2 < r^3-k^3$, then $p^2-k^2 < p^3-k^3$ 
because of item \emph{b)ii)}.
As a result, $\D \varphi^3_{[r^3]}$ would
contain components in the directions
$\D \hat u^1_{[r^2-k^2+1, r^3-k^3]}$ that cannot be
canceled by any linear combination within
$\Spanf{\D\varphi_{[R]}}$,
contradicting~\eqref{eq:imp_exterior_deriv}. The case
$r^2-k^2 > r^3-k^3$ is ruled out analogously. Thus
$r^1-k^1 < r^2-k^2$ implies $r^2-k^2 = r^3-k^3$. Hence,
among the differences $r^i-k^i$, only
one can be strictly smaller while the two largest
coincide.

\emph{Item c):} According to item \textit{b)i)} of
Theorem~\ref{thm:rank1_case},
assumption~\eqref{eq:rearrangement_case_rank1} is always
valid. Next, consider an invertible input transformation
of the form~\eqref{eq:input_trf_case_rank1}.

\emph{Case i):} If $r^1 - k^1 = \ddiff{\varphi}$, then
from item \textit{b)}, \mbox{$r^2 - p^2 = r^3 - p^3 = 0$},
meaning $p^2 = r^2$ and $p^3 = r^3$. Thus, $u^2$ and
$u^3$ first appear at orders $r^2$ and $r^3$,
respectively. The
structure~\eqref{eq:deriv_form_case_rank1} then shows that
the map \mbox{$(x, \hat u^1_{[0,r^1-k^1]}, \hat u^2,
\hat u^3) \mapsto \varphi_{[0,R]}$} is a diffeomorphism,
as the number of variables on both sides coincides and
$\D \varphi_{[0,R]}$ are linearly independent. Hence,
extending the original system by $d^1 = r^1 - k^1$
prolongations of $\hat u^1$ renders the system
\mbox{$\varphi$-SFL} via minimal prolongations with
$D = (r^1 - k^1)$ for any invertible
transformation~\eqref{eq:input_trf_case_rank1}.

\emph{Case ii):} If $r^1 - k^1 < \ddiff{\varphi}$, then
\mbox{$r^2 - p^2 = r^3 - p^3 > 0$}. For the system to be
\mbox{$\varphi$-SFL} via minimal prolongations with an
input transformation of the
form~\eqref{eq:input_trf_case_rank1}, there must exist a
transformation
$\hat u^2 = \phi_{\hat u^2}(x, \hat u^1, u^2, u^3)$ such
that $\varphi^2_{[p^2,r^2-1]}$ and
$\varphi^3_{[p^3,r^3-1]}$ can be expressed entirely in
terms of $x$, $\hat u^1_{[0,r^1-k^1-1]}$, and
$\hat u^2_{[0,r^2-p^2-1]}$. That means, after such a
transformation, the integer $s$
in~\eqref{eq:deriv_form_case_rank1} is given by
$s = r^2-p^2$. If such $\hat u^2$ exists, the map
$(x, \hat u^1_{[0,r^1-k^1]},
\hat u^2_{[0,r^2-p^2]}, \hat u^3)
\mapsto \varphi_{[0,R]}$ is a diffeomorphism, and
$D = (r^1-k^1, r^2-p^2)$. Conversely, if no such
$\hat u^2$ exists, then the dependence of $u^2,u^3$ and
their derivatives in $\varphi_{[0,R-1]}$ cannot be
captured by a single prolonged input, and the system is
not \mbox{$\varphi$-SFL} any~\eqref{eq:input_trf_case_rank1}.
\end{proof}


\end{document}